\def\benm{\begin{enumerate}}
\def\eenm{\end{enumerate}}
\def\bal{\begin{align}}
\def\eal{\end{align}}
\newtheorem{theorem}{Theorem}[section]
\theoremstyle{definition}
\newtheorem{definition}[theorem]{Definition}
\newtheorem{example}[theorem]{Example}
\newtheorem{proposition}[theorem]{Proposition}
\newtheorem{corollary}[theorem]{Corollary}
\theoremstyle{remark}
\newtheorem{remark}[theorem]{Remark}
\numberwithin{equation}{section}
\begin{document}

\title[Continuous Gabor transform for a class of non-Abelian groups]{Continuous Gabor transform for a class of non-Abelian groups}

\author[Arash Ghaani Farashahi]{Arash Ghaani Farashahi$^{*}$}
\address{$^1$ Department of Pure Mathematics, Faculty of Mathematical sciences, Ferdowsi University of
Mashhad (FUM), P. O. Box 1159, Mashhad 91775, Iran.}
\email{ghaanifarashahi@hotmail.com}
\email{ghaanifarashahi@stu-mail.um.ac.ir}

\curraddr{}


\author{Rajabali Kamyabi-Gol}
\address{$^2$ Department of Pure Mathematics, Faculty of Mathematical sciences, Ferdowsi University of
Mashhad (FUM), P. O. Box 1159, Mashhad 91775, Iran.}
\address{Center of Excellence in Analysis on Algebraic Structures (CEAAS),
Ferdowsi University of Mashhad (FUM), P. O. Box 1159, Mashhad 91775, Iran.}
\email{kamyabi@ferdowsi.um.ac.ir}


\subjclass[2000]{Primary 43A30, 43A32, 43A65, 22D10}

\date{}


\keywords{continuous Gabor transform, Fourier transform, Plancherel formula, Plancherel measure, unitary representation, irreducible representation, primary representation, type I group, unimodular group, measurable field of operators.}
\thanks{$^*$Corresponding author}
\thanks{E-mail addresses: ghaanifarashahi@hotmail.com (A. Ghaani Farashahi), kamyabi@ferdowsi.um.ac.ir (R. Kamyabi-Gol)}

\begin{abstract}
In this article we define the continuous Gabor transform for second countable, non-abelian, unimodular and type I groups and also we investigate a Plancherel formula and an inversion formula for our definition. As an example we show that how these formulas work for the Heisenberg group and also the matrix group ${SL(2,\mathbb{R})}$.
\end{abstract}

\maketitle

\section{\bf{Introduction}}

Many physical quantities including pressure, sound waves, electro fields, voltages, electronic currents and electromagnetic fields vary with time. These quantities are called signals or waveforms. Signals can be described in a time domain or in a frequency domain by the traditional methods of Fourier transform. The frequency description of signals is known as the frequency analysis or the spectral analysis. It was recognized long times ago that a global Fourier transform of a long time signal has little practical value in analyzing the frequency spectrum of the signal.
From the Fourier transform $\widehat{f}(w)$ of a signal $f(t)$, it is always possible to
determine which frequencies were present in the signal. However, there is absolutely no indication as to when these frequencies existed. So, the Fourier transform cannot provide any information regarding either a time evolution of spectral characteristic or possible localization with respect to the time variable. Some signals such as speech signals or ECG signals require the idea of frequency analysis that is local in time.

In general, the frequency of a signal varies with time, so there is a need for a joint time-frequency representation of a signal in order to describe fully the characteristics of the signal. This requires specific mathematical methods which go beyond the classical Fourier analysis. The first who introduced the joint time-frequency representation of a signal, was Gabor (1946). Gabor transform originates from the work of Dennis Gabor \cite{Gab}, in which he used translations and modulations of the Gaussian signal to represent one dimensional signals.
Also, this transform is eventually called such as; the short time Fourier transform, the Weyl-Heisenberg transform or the windowed Fourier transform.
Usually Gabor analysis is investigated on $\mathbb{R}$ and recently other settings have been looked at (See \cite{FithStro}). Using discrete signals, Gabor theory is done on $\mathbb{Z}$, and numerical implementations require to consider finite periodic signals and consequently Gabor theory on finite cyclic groups. In image processing Gabor theory on $\mathbb{R}^2$ or $\mathbb{R}^n$ and also its discrete version on $\mathbb{Z}^n$ and finite abelian groups is necessary. Computer scientists might even argue that the right setting for Gabor analysis are the $p$-adic groups, because their group operation imitate the computer arithmetic most closely. Since Gabor analysis resets mainly on the structure of translations and modulation, it is possible to extend it to other abelian groups.
For more explanation, we refer the readers to the monograph of ${\rm K.Gr\ddot{o}chenig}$ \cite{Gro} or complete work of H.G.Feichtinger and T.Strohmer \cite{FithStro}. One can find a complete extension of this theory to the set up of locally compact abelain groups.
But many groups in physics such as the Heisenberg group and also many applicable groups in engineering such as Motion groups are non-abelian and so that the standard STFT theory in abelian case fails. On the other hand generalization of the Feichtinger algebra and also Gelfand triple to the set up of non-abelian groups via the short time Fourier transform approach will be useful (see \cite{Feich1} and \cite{Feich2}). These facts persist us to find a generalization of the basic STFT theory for non-abelain groups. We recall that passing through the harmonic analysis of abelian groups to the harmonic analysis of non-abelian groups, many useful results and basic concepts in abelian harmonic analysis collapse, which play important roles in the usual Gabor theory. Thus, the extension of Gabor analysis (STFT theory) for non-abelian groups is not trivial and as we shall discuss, the shape of results in this theory change a lot. But fundamental properties are still valid, with different proofs.
In this article we would like to find an appropriate notation of the continuous Gabor transform on a class of non-abelain groups and also we investigate the generalization of its fundamental properties.

Throughout this paper which contains 4 sections, we assume that $G$ is a second countable, type I and unimodular locally compact group. Section 2 is deduced to fix notations and a brief summery on non-abelian Fourier analysis. In section 3, we define the continuous Gabor transform of a square integrable function $f$ on $G$, with respect to the window function $\psi$, as a measurable fields of operators  defined on $G\times\widehat{G}$ by
$$\mathcal{G}_\psi f(x,\pi):=\int_Gf(y)\overline{\psi(x^{-1}y)}\pi(y)^*dy.$$
Finally, in section 4, we study examples of continuous Gabor transform for the Heisenberg group and the matrix group $SL(2,\mathbb{R})$.

\section{\bf{Preliminaries and notations on non-abelian Fourier analysis}}

Let $\mathcal{H}$ be a separable Hilbert space. An operator $T\in\mathcal{B}(\mathcal{H})$ is called a Hilbert-Schmidt operator if for one, hence for any orthonormal basis $\{e_k\}$ of $\mathcal{H}$ we have $\sum_k\|Te_k\|^2<\infty$. The set of all Hilbert-Schmidt operators on $\mathcal{H}$ denoted by ${\rm HS}(\mathcal{H})$ and for $T\in{\rm HS}(\mathcal{H})$ we define Hilbert-Schmidt norm of $T$ as
$\|T\|_{\rm HS}^2:=\sum_k\|Te_k\|^2.$ It can be checked that ${\rm HS}(\mathcal{H})$ is a self adjoint and two sided ideal in $\mathcal{B}(\mathcal{H})$ and when $\mathcal{H}$ is finite-dimensional we have ${\rm HS}(\mathcal{H}_\pi)=\mathcal{B}(\mathcal{H})$, also
we call an operator $T\in\mathcal{B}(\mathcal{H})$ of trace-class, whenever $\|T\|_{\rm tr}:={\rm tr}[|T|]<\infty$,
where ${\rm tr}[T]:=\sum_{k}\langle Te_k,e_k\rangle$ and $|T|=(TT^*)^{1/2}$. For more details about trace-class and Hilbert-Schmidt-operators, we refer the readers to \cite{Mur}.

Let $(A,\mathcal{M})$ be a measurable space. A family $\{\mathcal{H}_\alpha\}_{\alpha\in A}$ of non zero separable Hilbert
spaces indexed by $A$ will be called a field of Hilbert spaces over $A$. A map $\Phi$ on $A$ such that
$\Phi(\alpha)\in\mathcal{H}_\alpha$ for each $\alpha\in A$ will be called a vector field on $A$. We denote the inner product and norm on $\mathcal{H}_\alpha$ by $\langle.,.\rangle_{\alpha}$ and $\|.\|_\alpha$, respectively.
A measurable field of Hilbert spaces over $A$ is a field of Hilbert spaces $\{\mathcal{H}_\alpha\}_{\alpha\in A}$ together with a countable set $\{e_j\}$ of vector fields such that the functions $\alpha\mapsto\langle e_j(\alpha),e_k(\alpha)\rangle$ are measurable for all $j,k$ and also the linear span of $\{e_j(\alpha)\}$ is dense in $\mathcal{H}_\alpha$ for each $\alpha\in A$. Given a measurable field of Hilbert spaces $(\{\mathcal{H}_{\alpha}\}_{\alpha\in A},\{e_j(\alpha)\})$ on $A$, a vector field $\Phi$ on $A$ will be called measurable if $\langle \Phi(\alpha),e_j(\alpha)\rangle_\alpha$ is measurable function on $A$ for each $j$. The direct integral of the spaces $\{\mathcal{H}_{\alpha}\}_{\alpha\in A}$ with respect to a measure $d\alpha$ on $A$ is denoted by
$\displaystyle\int_A^{\bigoplus}\mathcal{H}_\alpha d\alpha$. This is the space of measurable vectors fields $\Phi$ on $A$ such that we have
$\displaystyle\|\Phi\|^2=\int_A\|\Phi(\alpha)\|_\alpha^2d\alpha<\infty.$
Then it is easily follows that $\displaystyle\int_A^{\bigoplus}\mathcal{H}_\alpha d\alpha$ is a Hilbert space with the inner product
$\displaystyle\langle\Phi,\Psi\rangle=\int_A\langle\Phi(\alpha),\Psi(\alpha)\rangle_\alpha d\alpha.$

Henceforth, when $G$ is a locally compact group and $dx$ is a Haar measure on $G$, $\mathcal{C}_c(G)$ consists of all continuous complex-valued functions on $G$ with compact supports and for each $1\le p<\infty$, let $L^p(G)$ stand for the Banach space of equivalence classes of measurable complex valued functions on $G$ whose $p$-th powers are integrable.
Now, let $\pi$ be a continuous unitary representation of $G$ on the Hilbert space $\mathcal{H}_\pi$, for more details and elementary descriptions about the topological group representations see \cite{FollH} or \cite{HR1}. The representation $\pi$ is called primary, if only scaler multiples of the identity belongs to center of $\mathcal{C}(\pi)$. Note that, primary representations are also known as factor representations.
According to the Schur's lemma, Theorem 3.5 of \cite{FollH}, every irreducible representation is primary.
More generally, if $\pi$ is a direct sum of irreducible representations, $\pi$ is primary if and only if all its irreducible subrepresentations are unitarily equivalent. The group $G$ is said to be type I, if every primary representation of $G$ is a direct sum of copies of some irreducible representation.
Also, assume that the dual space $\widehat{G}$ be the set of all equivalence classes $[\pi]$ of irreducible unitary representations $\pi$ of $G$ and we still use $\pi$ to denote its equivalence class $[\pi]$. Note that, we equipped $\widehat{G}$ with the Fell topology. See \cite{FollH}, for a discussion of this topology on $\widehat{G}$.

There is a measure $d\pi$ on $\widehat{G}$, called the Plancherel measure, uniquely determined once the Haar measure on $G$ is fixed. The family $\{{\rm HS}(\mathcal{H}_\pi)\}_{\pi\in\widehat{G}}$ of Hilbert spaces indexed by $\widehat{G}$ is a field of Hilbert spaces over $\widehat{G}$.
Recall that, ${\rm HS}(\mathcal{H}_\pi)$ is a Hilbert space with the inner product $\langle T,S\rangle_{{\rm HS}(\mathcal{H}_\pi)}={\rm tr}(S^*T)$.
The direct integral of the spaces $\{{\rm HS}(\mathcal{H}_\pi)\}_{\pi\in\widehat{G}}$ with respect to $d\pi$, is denoted by $\displaystyle\int^{\bigoplus}_{\widehat{G}}{\rm HS}(\mathcal{H}_\pi)d\pi$ and for convenience we use the notation $\mathcal{H}^2(\widehat{G})$ for it. If $f\in L^1(G)$, the Fourier transform of $f$ is a measurable field of operators over $\widehat{G}$ given by
\begin{equation}\label{0.0}
\mathcal{F}f(\pi)=\widehat{f}(\pi)=\int_Gf(x)\pi(x)^*dx.
\end{equation}
Let $\mathcal{J}^1(G):=L^1(G)\cap L^2(G)$ and $\mathcal{J}^2(G)$ be the finite linear combinations of convolutions of elements of $\mathcal{J}^1(G)$.
In \cite{Seg1}, Segal proved that, when $G$ is a second countable, non-abelian, unimodular and type I group, there is a measure $d\pi$ on $\widehat{G}$, uniquely determine
once the Haar measure $dx$ on $G$ is fixed, which is called the Plancherel measure and satisfies the following properties;
\begin{enumerate}
\item The Fourier transform $f\mapsto \widehat{f}$ maps $\mathcal{J}^1(G)$ into $\mathcal{H}^2(\widehat{G})$ and it extends to a unitary map from $L^2(G)$ onto $\mathcal{H}^2(\widehat{G})$.
\item Also, each $h\in\mathcal{J}^2(G)$ satisfies the Fourier inversion formula
$\displaystyle h(x)=\int_{\widehat{G}}{\rm tr}[\pi(x)\widehat{h}(\pi)]d\pi.$
\end{enumerate}
Commonly class of type I and unimodular groups are compact groups. When $G$ is a compact group due to Theorem 5.2 of \cite{FollH}, each irreducible representation $(\pi,\mathcal{H}_\pi)$ of $G$ is finite dimensional which implies that $\mathcal{B}(\mathcal{H}_\pi)={\rm HS}(\mathcal{H}_\pi)$ and also every unitary representation of $G$ is a direct sum of irreducible representations. If $\pi$ is any unitary representation of $G$, for each $u,v\in\mathcal{H}_\pi$ the functions $\pi_{u,v}(x)=\langle\pi(x)u,v\rangle$ are called matrix elements of $\pi$. If $\{e_j\}$ is an orthonormal basis for $\mathcal{H}_\pi$, we put $\pi_{ij}(x)=\langle\pi(x)e_j,e_i\rangle$. Notation $\mathcal{E}_\pi$ stands for the linear span of the matrix elements of $\pi$ and $\mathcal{E}$ for the linear span of $\bigcup_{[\pi]\in\widehat{G}}\mathcal{E}_\pi$. The Peter-Weyl Theorem, Theorem 3.12 of \cite{FollH}, guarantee that $\mathcal{E}$ is uniformly dense in $\mathcal{C}(G)$,
$L^2(G)=\bigoplus_{[\pi]\in\widehat{G}}\mathcal{E}_\pi,$
and also $\{d_\pi^{-1/2}\pi_{ij}:i,j=1...d_\pi,[\pi]\in\widehat{G}\}$ is an orthonormal basis for $L^2(G)$.
Thus, according to the Peter-Weyl Theorem, if $f\in L^2(G)$ we have
\begin{equation}\label{3}
f=\sum_{[\pi]\in\widehat{G}}\sum_{i,j=1}^{d_\pi}c_{ij}^\pi(f)\pi_{ij},
\end{equation}
where $c_{i,j}^\pi(f)=d_\pi\langle f,\pi_{ij}\rangle_{L^2(G)}$.
If we choose an orthonormal basis for $\mathcal{H}_\pi$ so that $\pi(x)$ is represented by the matrix $(\pi_{ij}(x))$, then $\widehat{f}(\pi)$ is given by the matrix
$\widehat{f}(\pi)_{ij}=d_{\pi}^{-1}c_{ji}^\pi(f)$ and satisfies
$$\sum_{i,j=1}^{d_\pi}c_{ij}^\pi(f)\pi_{ij}(x)=d_\pi\sum_{i,j=1}^{d_\pi}\widehat{f}(\pi)_{ji}\pi_{ij}(x)=d_\pi{\rm tr}[\widehat{f}(\pi)\pi(x)].$$
So, for $f\in L^2(G)$, (\ref{3}) becomes a Fourier inversion formula,
$f(x)=\sum_{[\pi]\in\widehat{G}}d_\pi{\rm tr}[\widehat{f}(\pi)\pi(x)].$
The Parseval formula
\begin{equation}\label{5}
\|f\|_{L^2(G)}^2=\sum_{[\pi]\in\widehat{G}}\sum_{i,j=1}^{d_\pi}d_\pi^{-1}|c_{ij}(f)|^2
\end{equation}
becomes
$$\|f\|_{L^2(G)}^2=\sum_{[\pi]\in\widehat{G}}d_\pi{\rm tr}[\widehat{f}(\pi)^*\widehat{f}(\pi)].$$

\section{\bf{Continuous Gabor transform}}

Throughout this paper, let $G$ be a second countable, non-abelian, unimodular and type I group.
Let $d\sigma$ be the product of the Haar measure $dx$ on $G$ and the Plancherel measure $d\pi$ on $\widehat{G}$. For each $(x,\pi)\in G\times\widehat{G}$, let $\mathcal{H}_{(x,\pi)}=\pi(x){\rm HS}(\mathcal{H}_\pi)$, where
$\pi(x){\rm HS}(\mathcal{H}_\pi)=\{\pi(x)T:T\in{\rm HS}(\mathcal{H}_\pi)\}.$
It can be checked that $\mathcal{H}_{(x,\pi)}$ is a Hilbert space with respect to the inner product $\langle\pi(x)T,\pi(x)S\rangle_{\mathcal{H}_{(x,\pi)}}={\rm tr}(S^*T).$
The family $\{\mathcal{H}_{(x,\pi)}\}_{(x,\pi)\in G\times\widehat{G}}$ of Hilbert spaces indexed by $G\times\widehat{G}$ is a field of Hilbert spaces over $G\times\widehat{G}$. The direct integral of the spaces $\{\mathcal{H}_{(x,\pi)}\}_{(x,\pi)\in G\times\widehat{G}}$ with respect to $\sigma$, is denoted by $\mathcal{H}^2(G\times\widehat{G})$, that is the space of all measurable vector fields $F$ on $G\times\widehat{G}$ such that
$$\|F\|_{\mathcal{H}^2(G\times\widehat{G})}^2=\int_{G\times\widehat{G}}\|F(x,\pi)\|_{(x,\pi)}^2d\sigma(x,\pi)<\infty.$$
It can be checked that  $\mathcal{H}^2(G\times\widehat{G})$ becomes a Hilbert space, with the inner product
$$\langle F,K\rangle_{\mathcal{H}^2(G\times\widehat{G})}=\int_{G\times\widehat{G}}{\rm tr}[K(x,\pi)^*F(x,\pi))]d\sigma(x,\pi).$$
Let $L^2(G,\mathcal{B}(\mathcal{H}_\pi))$ be the Banach space of all measurable functions $\phi:G\to\mathcal{B}(\mathcal{H}_\pi)$ with $$\|\phi\|_{L^2(G,\mathcal{B}(\mathcal{H}_\pi))}^2=\int_G\|\phi(x)\|^2dx<\infty,$$
where for each $x\in G$ by $\|\phi(x)\|$ we mean the operator norm of $\phi(x)$. More explanations about spaces related to functions with values in a Banach space can be found in \cite{50}.
For each $\psi\in L^2(G)$ and $\pi\in\widehat{G}$, the modulation of $\psi$ with respect to $\pi$, is an operator valued mapping defined almost every where on $G$ by
$\psi_\pi(x)={\psi}(x)\pi(x).$
The linear transformation $M_\pi:L^2(G)\to L^2(G,\mathcal{B}(\mathcal{H}_\pi))$ defined by $\psi\mapsto \psi_\pi$ is called the $\pi$-modulation operator. Clearly, $M_\pi$ is an isometry, because for each $\psi\in L^2(G)$ we have
\begin{align*}
\|M_\pi\psi\|^2_{L^2(G,\mathcal{B}(\mathcal{H}_\pi))}&=\int_G\|M_\pi\psi(x)\|^2 dx
\\&=\int_G\|\psi(x)\pi(x)\|^2dx=\|\psi\|_{L^2(G)}^2
\end{align*}
Our definition for modulation coincides with the usual definition of modulation as multiplication by a character in the abelian group case.
 Indeed, when $G$ is abelian, each irreducible representation of $G$ is one-dimensional and the corresponding representation space $\mathcal{H}_\pi$ is isomorphic to $\mathbb{C}$.
For $f\in L^2(G)$ and $\phi\in L^2(G,\mathcal{B}(\mathcal{H}_\pi))$, let $\langle f,\phi\rangle_\pi$ be the bounded operator on $\mathcal{H}_\pi$ defined by
$$\displaystyle\langle f,\phi\rangle_\pi=\int_G f(y){\phi(y)}^*dy.$$
We interpret this operator valued integral in the weak sense. That is, for any $z\in\mathcal{H}_\pi$ we define $\langle f,\phi\rangle_\pi z$ by specifying its inner product with an arbitrary $v\in\mathcal{H}_\pi$ via
\begin{equation}\label{f2}
\left\langle\langle f,\phi\rangle_\pi z,v\right\rangle=\int_Gf(y)\langle\phi(y)^*z,v\rangle dy.
\end{equation}
Since the map $y\mapsto\langle\phi(y)^*z,v\rangle$ belongs to $L^2(G)$, the right hand side integral (\ref{f2}) is the ordinary integral of a function in $L^1(G)$. It is not difficult to see that $|\langle\langle f,\phi\rangle_\pi z,v\rangle|\le\|z\|\|v\|\|f\|_{L^2(G)}\|\phi\|_{L^2(G,\mathcal{B}(\mathcal{H}_\pi))}$, therefore $\langle f,\phi\rangle_\pi$ defines a bounded linear operator on $\mathcal{H}_\pi$, whose norm salsifies $\|\langle f,\phi\rangle_\pi\|\le\|f\|_{L^2(G)}\|\phi\|_{L^2(G,\mathcal{B}(\mathcal{H}_\pi))}$.
We can consider $$\langle.,.\rangle_\pi:L^2(G)\times L^2(G,\mathcal{B}(\mathcal{H}_\pi))\to \mathcal{B}(\mathcal{H}_\pi)$$ as a separately continuous sesqulinear map with values in $\mathcal{B}(\mathcal{H}_\pi)$. When $G$ is an abelian group, for each $\pi\in\widehat{G}$, this sesqulinear map coincides with the usual inner product of $L^2(G)$.
\begin{definition}
Let $\psi$ be a window function (a fixed nonzero function in $L^2(G)$) and $f\in \mathcal{C}_c(G)$. We define the continuous Gabor transform of $f$ with respect to the window function $\psi$ as a measurable fields of operators on $G\times\widehat{G}$ by
\begin{equation}\label{f3}
\mathcal{G}_\psi f(x,\pi):=\int_Gf(y)\overline{\psi(x^{-1}y)}\pi(y)^*dy.
\end{equation}
Before studying the basic properties of our extension for the continuous Gabor transform we verify ambiguous points of this definition.
First, note that we consider the operator-valued integral (\ref{f3}) in the weak sense. In other words, for each $(x,\pi)\in G\times\widehat{G}$ and $\zeta,\xi\in\mathcal{H}_{\pi}$ we have
$$\langle\mathcal{G}_\psi f(x,\pi)\zeta,\xi\rangle=\int_Gf(y)\overline{\psi(x^{-1}y)}\langle\pi(y)^*\zeta,\xi\rangle dy.$$
Since the map $y\mapsto \langle\pi(y)^*\zeta,\xi\rangle$ is a bounded continuous function on $G$, the right hand side integral is the ordinary integral
of a function in $L^1(G)$. It is clear that $|\langle\mathcal{G}_\psi f(x,\pi)\zeta,\xi\rangle|\le\|\zeta\|\|\xi\|\|f\|_{L^2(G)}\|\psi\|_{L^2(G)}$, so $\mathcal{G}_\psi f(x,\pi)$ is indeed a bounded linear operator on $\mathcal{H}_{\pi}$ such that $\|\mathcal{G}_\psi f(x,\pi)\|\le\|f\|_{L^2(G)}\|\psi\|_{L^2(G)}$.

Second, since $f\in\mathcal{C}_c(G)$ and $\psi\in L^2(G)$ we have $f.L_x\psi\in \mathcal{J}^1(G)$ for each $x\in G$. Plancherel theorem, Theorem 7.44 of \cite{FollH}, guarantee that $\widehat{f.L_{x}\psi}(\pi)$ is a Hilbert-Schmidt operator for almost every where $\pi\in\widehat{G}$. Thus, for $\sigma$-almost every $(x,\pi)$ in $G\times\widehat{G}$ we have $\mathcal{G}_\psi f(x,\pi)\in\mathcal{H}_{(x,\pi)}$. Indeed,
\begin{align*}
\mathcal{G}_\psi f(x,\pi)&=\int_Gf(y)\overline{\psi(x^{-1}y)}\pi(y)^*dy
\\&=\int_Gf(y)\overline{\psi(x^{-1}y)}\pi(x)\pi(yx)^*dy
\\&=\pi(x)\left(\int_Gf(y)\overline{\psi(x^{-1}y)}\pi(yx)^*dy\right)
\\&=\pi(x)\left(\int_Gf(yx^{-1})\overline{\psi(x^{-1}yx^{-1})}\pi(y)^*dy\right)
=\pi(x)\mathcal{F}\left(R_{x^{-1}}(f.L_x\overline{\psi})\right)(\pi).
\end{align*}
\end{definition}
In the next proposition we state some worthwhile properties of our definition. First, let us recall that when $G$ is unimodular and $1\le p<\infty$, the involution for $g\in L^p(G)$ is $\widetilde{g}(x)=\overline{g(x^{-1})}$.
\begin{proposition}\label{14}
{\it Let $\psi$ be a window function and $f\in \mathcal{C}_c(G)$. Then, for each $(x,\pi)\in G\times \widehat{G}$ we have
\begin{enumerate}
\item $\mathcal{G}_\psi f(x,\pi)=\widehat{\mathcal{L}_x^\psi(f)}(\pi)$, where $\mathcal{L}_x^\psi(f)$ in $L^1(G)$ is defined for a.e. $y$ in $G$ by $f(y)\overline{\psi(x^{-1}y)}$.
\item $\mathcal{G}_\psi f(x,\pi)^*=\mathcal{F}\left({\widetilde{{\mathcal{L}_x^{\psi}(f)}}}\right)(\pi)$.
\item $\mathcal{G}_\psi f(x,\pi)=\langle f,M_\pi(L_x\psi)\rangle_\pi$.
\end{enumerate}
We call $(1)$ the Fourier representation form of the continuous Gabor transform.
}\end{proposition}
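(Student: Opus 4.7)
The plan is to verify each of the three identities by directly unwinding the definitions already established in the text, with unimodularity of $G$ being the only nontrivial ingredient (needed in part (2)).

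For part (1), I would start by observing that $\mathcal{L}_x^\psi(f)$ lies in $L^1(G)$: since $f\in\mathcal{C}_c(G)$, the function $y\mapsto f(y)\overline{\psi(x^{-1}y)}$ is compactly supported, and by the Cauchy--Schwarz inequality its $L^1$-norm is bounded by $\|f\|_{L^2(G)}\|\psi\|_{L^2(G)}$. Hence $\widehat{\mathcal{L}_x^\psi(f)}(\pi)$ is defined via (\ref{0.0}) as the weak integral
\begin{equation*}
\widehat{\mathcal{L}_x^\psi(f)}(\pi)=\int_G f(y)\overline{\psi(x^{-1}y)}\pi(y)^*\,dy,
\end{equation*}
which is precisely the right-hand side of (\ref{f3}). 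So (1) is just a matter of matching integrands.

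For part (2), I would take the adjoint inside the weak integral. For any $\zeta,\xi\in\mathcal{H}_\pi$,
\begin{equation*}
\langle \mathcal{G}_\psi f(x,\pi)^*\zeta,\xi\rangle=\overline{\langle \mathcal{G}_\psi f(x,\pi)\xi,\zeta\rangle}
=\int_G \overline{f(y)}\,\psi(x^{-1}y)\,\langle \pi(y)\zeta,\xi\rangle\,dy,
\end{equation*}
and then I would substitute $y\mapsto y^{-1}$, invoking unimodularity so the Haar measure is preserved. The integrand becomes $\overline{f(y^{-1})}\,\psi(x^{-1}y^{-1})\,\langle \pi(y)^*\zeta,\xi\rangle = \widetilde{\mathcal{L}_x^\psi(f)}(y)\langle \pi(y)^*\zeta,\xi\rangle$, which by (\ref{0.0}) is exactly $\langle\mathcal{F}(\widetilde{\mathcal{L}_x^\psi(f)})(\pi)\zeta,\xi\rangle$. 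This is the standard identity $\widehat{g}(\pi)^*=\widehat{\widetilde{g}}(\pi)$ for unimodular groups, applied here to $g=\mathcal{L}_x^\psi(f)$.

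For part (3), I would simply unwind the sesquilinear pairing introduced in (\ref{f2}). With $\phi=M_\pi(L_x\psi)$, the definition of the $\pi$-modulation operator gives $\phi(y)=(L_x\psi)(y)\pi(y)=\psi(x^{-1}y)\pi(y)$, so $\phi(y)^*=\overline{\psi(x^{-1}y)}\pi(y)^*$. Substituting into the weak definition of $\langle f,\phi\rangle_\pi$ recovers (\ref{f3}) verbatim.

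There is no real obstacle here; the main thing to keep careful track of is that all three operator-valued integrals are interpreted weakly, so every identity should be established by pairing both sides against arbitrary vectors $\zeta,\xi\in\mathcal{H}_\pi$ and reducing to an equality of ordinary Lebesgue integrals. The only step requiring a genuine hypothesis is the change of variable $y\mapsto y^{-1}$ in part (2), where unimodularity of $G$ is essential; in the non-unimodular case a modular function would appear and the clean identity would fail.
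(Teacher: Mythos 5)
Your proposal is correct and follows essentially the same route as the paper: parts (1) and (3) by direct unwinding of the weak operator-valued integrals, and part (2) by the inversion change of variables $y\mapsto y^{-1}$ using unimodularity, i.e.\ the identity $\widehat{g}(\pi)^*=\widehat{\widetilde{g}}(\pi)$ applied to $g=\mathcal{L}_x^\psi(f)$. The only cosmetic difference is that the paper verifies (2) on the diagonal quadratic form $\langle\,\cdot\,\zeta,\zeta\rangle$ (sufficient by polarization), whereas you pair against arbitrary $\zeta,\xi\in\mathcal{H}_\pi$, which is the same argument.
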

\begin{proof}
(1) and (3) follow immediately from the definitions. Note that, H${\rm\ddot{o}}$lder's inequality guarantee that $\mathcal{L}_x^\psi(f)$ belongs to $L^1(G)$. Let $(x,\pi)\in G\times\widehat{G}$ and $\zeta\in\mathcal{H}_{\pi}$. For (2), using unimodularity of $G$ and the identity $\widetilde{\mathcal{L}_x^\psi(f)}=\widetilde{f}.\overline{\widetilde{L_x\psi}}$, we get
\begin{align*}
\langle\mathcal{G}_\psi f(x,\pi)^*\zeta,\zeta\rangle&=\langle\zeta,\mathcal{G}_\psi f(x,\pi)\zeta\rangle
\\&=\int_G\langle\zeta,f(y)\overline{\psi(x^{-1}y)}\pi(y)^*\zeta\rangle dy
\\&=\int_G\langle\overline{f(y)}{L_x\psi(y)}\pi(y)\zeta,\zeta\rangle dy
\\&=\int_G\langle\widetilde{f}(y)\overline{\widetilde{L_x\psi}}(y)\pi(y)^*\zeta,\zeta\rangle dy
=\left\langle\mathcal{F}\left({\widetilde{{\mathcal{L}_x^{\psi}(f)}}}\right)(\pi)\zeta,\zeta\right\rangle.
\end{align*}
\end{proof}
The next theorem shows that the continuous Gabor transform preserves the energy of the signal. More preicisly, we prove that for each window function $\psi$, the linear operator $\mathcal{G}_\psi:\mathcal{C}_c(G)\to\mathcal{H}^2(G\times\widehat{G})$ given by $f\mapsto\mathcal{G}_\psi f$ is a multiple of an isometry.
\begin{theorem}\label{15}
Let $\psi$ be a window function. Then, for each $f\in \mathcal{C}_c(G)$ we have
$$\|\mathcal{G}_\psi f\|_{\mathcal{H}^2(G\times\widehat{G})}=\|f\|_{L^2(G)}\|\psi\|_{L^2(G)}.$$
\end{theorem}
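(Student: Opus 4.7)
My plan is to reduce the claim to the ordinary Plancherel theorem on $G$ via part (1) of Proposition \ref{14}, and then kill the extra $x$-integration with a unimodularity change of variable.

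First I would unpack the norm on $\mathcal{H}^2(G\times\widehat{G})$. Because $\pi(x)$ is unitary on $\mathcal{H}_\pi$, the space $\mathcal{H}_{(x,\pi)}=\pi(x)\mathrm{HS}(\mathcal{H}_\pi)$ coincides (as a set with its inner product) with $\mathrm{HS}(\mathcal{H}_\pi)$: writing $A=\pi(x)T$, $B=\pi(x)S$ gives $\mathrm{tr}(S^*T)=\mathrm{tr}(B^*A)$. Therefore $\|\mathcal{G}_\psi f(x,\pi)\|_{(x,\pi)}^2=\|\mathcal{G}_\psi f(x,\pi)\|_{\mathrm{HS}}^2$, and
$$\|\mathcal{G}_\psi f\|_{\mathcal{H}^2(G\times\widehat{G})}^2=\int_G\int_{\widehat{G}}\|\mathcal{G}_\psi f(x,\pi)\|_{\mathrm{HS}}^2\,d\pi\,dx,$$
the use of Fubini being legal because the integrand is non-negative.

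Next I would apply Proposition \ref{14}(1): $\mathcal{G}_\psi f(x,\pi)=\widehat{\mathcal{L}_x^\psi(f)}(\pi)$. For each fixed $x$, the function $\mathcal{L}_x^\psi(f)(y)=f(y)\overline{\psi(x^{-1}y)}$ lies in $L^1(G)\cap L^2(G)=\mathcal{J}^1(G)$: the compact support of $f$ plus H\"older gives integrability, and boundedness of $f$ with compact support combined with $L_x\psi\in L^2(G)$ gives square-integrability. Hence Segal's Plancherel theorem (property (1) of the preliminaries) yields
$$\int_{\widehat{G}}\|\widehat{\mathcal{L}_x^\psi(f)}(\pi)\|_{\mathrm{HS}}^2\,d\pi=\|\mathcal{L}_x^\psi(f)\|_{L^2(G)}^2=\int_G|f(y)|^2|\psi(x^{-1}y)|^2\,dy.$$

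Plugging this back in and interchanging the $x$ and $y$ integrals (Tonelli), I am left with
$$\|\mathcal{G}_\psi f\|_{\mathcal{H}^2(G\times\widehat{G})}^2=\int_G|f(y)|^2\left(\int_G|\psi(x^{-1}y)|^2\,dx\right)dy.$$
The inner integral is $\|\psi\|_{L^2(G)}^2$ by unimodularity (the map $x\mapsto x^{-1}y$ is the composition of inversion and right translation, both of which preserve Haar measure on a unimodular group), so the outer integral collapses to $\|f\|_{L^2(G)}^2\|\psi\|_{L^2(G)}^2$, and taking square roots gives the theorem.

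The one genuinely delicate point is justifying that $\mathcal{G}_\psi f$ is a \emph{measurable} vector field on $G\times\widehat{G}$ so that the direct-integral norm is even defined; this is handled once one notes via Proposition \ref{14}(1) that $(x,\pi)\mapsto\widehat{\mathcal{L}_x^\psi(f)}(\pi)$ is the composition of the jointly measurable map $x\mapsto\mathcal{L}_x^\psi(f)\in L^2(G)$ with the Plancherel isometry. Everything else is bookkeeping with Fubini, Plancherel, and unimodular invariance.
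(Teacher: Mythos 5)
Your proposal is correct and follows essentially the same route as the paper's proof: both reduce, via Proposition~\ref{14}, to the Plancherel theorem applied to $\mathcal{L}_x^\psi(f)\in\mathcal{J}^1(G)$ for each fixed $x$, then finish with Fubini--Tonelli and the unimodular invariance $\int_G|\psi(x^{-1}y)|^2\,dx=\|\psi\|_{L^2(G)}^2$. The only cosmetic difference is that the paper expresses $\|\mathcal{G}_\psi f(x,\pi)\|_{(x,\pi)}^2$ as $\mathrm{tr}[\mathcal{G}_\psi f(x,\pi)^*\mathcal{G}_\psi f(x,\pi)]$ using the involution identity of Proposition~\ref{14}(2), whereas you invoke the Plancherel isometry in Hilbert--Schmidt norm directly; your added checks that $\mathcal{L}_x^\psi(f)\in\mathcal{J}^1(G)$ and that the field is measurable are points the paper handles in the remarks following its definition.
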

\begin{proof}
Using Proposition \ref{14}, Theorem 2.1 of \cite{Lip}, Fubini's theorem and also unimodularity of $G$ we have
\begin{align*}
\|\mathcal{G}_\psi f\|_{\mathcal{H}^2(G\times\widehat{G})}^2&=
\int_{G\times \widehat{G}}\|\mathcal{G}_\psi f(x,\pi)\|_{(x,\pi)}^2d\sigma(x,\pi)
\\&=\int_{G\times \widehat{G}}\mathrm{tr}[\mathcal{G}_\psi f(x,\pi)^*\mathcal{G}_\psi f(x,\pi)]d\sigma(x,\pi)
\\&=\int_G\int_{\widehat{G}}\mathrm{tr}[\mathcal{G}_\psi f(x,\pi)^*\mathcal{G}_\psi f(x,\pi)]d\pi dx
\\&=\int_G\int_{\widehat{G}}\mathrm{tr}[{\widehat{{\widetilde{\mathcal{L}_x^{\psi}(f)}}}(\pi)}
\widehat{\mathcal{L}_x^\psi(f)}(\pi)]d\pi dx
\\&=\int_G\int_{\widehat{G}}\mathrm{tr}[{\widehat{{\mathcal{L}_x^{\psi}(f)}}(\pi)}^*
\widehat{\mathcal{L}_x^\psi(f)}(\pi)]d\pi dx
\\&=\int_G\int_{G}\overline{\mathcal{L}_x^{\psi}(f)(y)}
{\mathcal{L}_x^\psi(f)}(y)dy dx
=\|f\|_{L^2(G)}^2\|\psi\|_{L^2(G)}^2.
\end{align*}
\end{proof}

According to Theorem \ref{15}, the continuous Gabor transform $\mathcal{G}_\psi:\mathcal{C}_c(G)\to\mathcal{H}^2(G\times\widehat{G})$ defined by $f\mapsto\mathcal{G}_\psi f$ is a multiple an isometry. So, we can extend $\mathcal{G}_\psi$ uniquely to a bounded linear operator from $L^2(G)$ into a closed subspace of $\mathcal{H}^2(G\times\widehat{G})$ which we still use the notation $\mathcal{G}_\psi$ for this extension and this extension for each $f\in L^2(G)$ satisfies
$$\|\mathcal{G}_\psi f\|_{\mathcal{H}^2(G\times\widehat{G})}=\|f\|_{L^2(G)}\|\psi\|_{L^2(G)}.$$
We call $\mathcal{G}_\psi f$ the continuous Gabor transform of $f\in L^2(G)$ with respect to the window function $\psi$, which can be considered as the sesquilinear map $(f,\psi)\mapsto \mathcal{G}_\psi f$ on $L^2(G)\times L^2(G)$ into $\mathcal{H}^2(G\times\widehat{G})$.

We can conclude the following orthogonality relation for the continuous Gabor transform.
\begin{corollary}\label{16}{\it Let $\psi,\varphi$ be two window functions. The continuous Gabor transform satisfies the orthogonality relation;
$\langle\mathcal{G}_\psi f,\mathcal{G}_\varphi g\rangle_{\mathcal{H}^2(G\times\widehat{G})}=\langle\varphi,\psi\rangle_{L^2(G)}\langle f,g\rangle_{L^2(G)}$, for each $\ f,g\in L^2(G).$
Moreover, the normalized Gabor transform $\|\psi\|_{L^2(G)}^{-1}\mathcal{G}_\psi$ is an isometry from $L^2(G)$ onto a closed subspace of
$\mathcal{H}^2(G\times\widehat{G})$.
}\end{corollary}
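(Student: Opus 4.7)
My plan is to deduce the orthogonality relation from Theorem~\ref{15} by polarizing twice, first in the signal variable and then in the window, and to read off the isometry/closed-range statement as an immediate corollary.

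First I would note that Theorem~\ref{15} extends by density from $\mathcal{C}_c(G)$ to all of $L^2(G)$, so we have the norm identity $\|\mathcal{G}_\psi f\|_{\mathcal{H}^2(G\times\widehat{G})}^{2}=\|f\|_{L^2(G)}^{2}\|\psi\|_{L^2(G)}^{2}$ for every $f,\psi\in L^2(G)$. Inspection of (\ref{f3}) shows that $(f,\psi)\mapsto\mathcal{G}_\psi f$ is linear in $f$ and conjugate-linear in $\psi$ (because of the complex conjugate on $\psi(x^{-1}y)$). For fixed $\psi$, the sesquilinear form $(f,g)\mapsto\langle\mathcal{G}_\psi f,\mathcal{G}_\psi g\rangle_{\mathcal{H}^2(G\times\widehat{G})}$ has diagonal $\|\psi\|_{L^2(G)}^{2}\|f\|_{L^2(G)}^{2}$, so the standard polarization identity on a complex Hilbert space yields
\[
\langle\mathcal{G}_\psi f,\mathcal{G}_\psi g\rangle_{\mathcal{H}^2(G\times\widehat{G})}=\|\psi\|_{L^2(G)}^{2}\langle f,g\rangle_{L^2(G)}\qquad(f,g\in L^2(G)).
\]

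Next I would allow the window to vary. Fix $f,g\in L^2(G)$ and consider $Q(\varphi,\psi):=\langle\mathcal{G}_\psi f,\mathcal{G}_\varphi g\rangle_{\mathcal{H}^2(G\times\widehat{G})}$. Because $\mathcal{G}_\psi f$ is conjugate-linear in $\psi$ and the $\mathcal{H}^2$-inner product is conjugate-linear in its second slot, $Q$ is conjugate-linear in $\psi$ and linear in $\varphi$; its diagonal $Q(\psi,\psi)$ equals $\|\psi\|_{L^2(G)}^{2}\langle f,g\rangle_{L^2(G)}$ by the previous step. A second polarization, this time in the window arguments, therefore gives
\[
\langle\mathcal{G}_\psi f,\mathcal{G}_\varphi g\rangle_{\mathcal{H}^2(G\times\widehat{G})}=\langle\varphi,\psi\rangle_{L^2(G)}\langle f,g\rangle_{L^2(G)},
\]
which is the claimed orthogonality relation. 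For the second assertion, Theorem~\ref{15} already shows that $\|\psi\|_{L^2(G)}^{-1}\mathcal{G}_\psi:L^2(G)\to\mathcal{H}^2(G\times\widehat{G})$ preserves norms, hence is an isometry of Hilbert spaces, and the range of an isometry between Hilbert spaces is automatically closed.

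The whole argument is essentially bookkeeping once Theorem~\ref{15} is available; the only mildly delicate point, and the one I would write out carefully, is tracking which slots are linear versus conjugate-linear under the double polarization so that the factor $\langle\varphi,\psi\rangle_{L^2(G)}$ appears on the right with the arguments in the correct order (rather than as $\langle\psi,\varphi\rangle_{L^2(G)}$). An alternative route, which I would mention only if the polarization bookkeeping turned out to be messy, is to repeat the computation of Theorem~\ref{15} verbatim but with the pair $(f,\psi)$ on one side of the trace and $(g,\varphi)$ on the other, invoking the Plancherel identity in its sesquilinear form $\langle\widehat{h_1},\widehat{h_2}\rangle_{\mathcal{H}^2(\widehat{G})}=\langle h_1,h_2\rangle_{L^2(G)}$ together with Fubini and unimodularity; this yields the same identity directly.
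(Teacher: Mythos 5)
Your proposal is correct and takes essentially the same route as the paper: the paper's entire proof is the one-line appeal ``By using the polarization identity, the result follows,'' and your double polarization --- first in the signal pair $(f,g)$ with $\psi$ fixed, then in the window pair $(\varphi,\psi)$, with exactly the linearity/conjugate-linearity bookkeeping needed to land on $\langle\varphi,\psi\rangle_{L^2(G)}$ rather than $\langle\psi,\varphi\rangle_{L^2(G)}$ --- is precisely the argument being invoked, relying as you note on the density extension and sesquilinearity of $(f,\psi)\mapsto\mathcal{G}_\psi f$ that the paper records after Theorem~\ref{15}. Your justification of the final claim (an isometry between Hilbert spaces has closed range) is the standard one and completes the statement; no gaps.
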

\begin{proof} By using the polarization identity, the result follows.
\end{proof}


\begin{remark}
When $G$ is a compact group automatically $G$ is unimodular and also type I and so that we can use the definition (\ref{f3}) of $\mathcal{G}_\psi f$ for all $f,\psi\in L^2(G)$, which first we define the continuous Gabor transform of $f\in\mathcal{C}_c(G)$ and then we extend it for elements of $L^2(G)$. But independent of (\ref{f3}) since $G$ is a compact group we can consider for a window function $\psi$ in $L^2(G)$ and each $f\in L^2(G)$ we can define the continuous Gabor transform of $f$ with respect to the window function $\psi$ as a Hilbert-Schmidt operator valued function on $G\times\widehat{G}$ by
\begin{equation}
\mathcal{G}_\psi f(x,\pi)=\int_Gf(y)\overline{\psi(x^{-1}y)}\pi(y)^*dy.
\end{equation}
In this case by compactness of $G$, it is guaranteed that for each $(x,\pi)\in G\times\widehat{G}$ we have $\mathcal{G}_\psi f(x,\omega)\in{\rm HS}(\mathcal{H}_\pi)$. In the sequel corollary we state the Plancheral formula for the case $G$ is compact. It should be noted that although it is a corollary of the preceding Theorem but it can be proved separately.
\end{remark}

\begin{corollary}
{\it Let $G$ be a compact group and $\psi$ be a window function. Then, for each $f\in L^2(G)$ we have
$$\int_G\sum_{[\pi]\in\widehat{G}}\|\mathcal{G}_\psi f(x,\pi)\|_{\rm HS}^2dx=\|f\|_{L^2(G)}^2\|\psi\|_{L^2(G)}^2.$$
}\end{corollary}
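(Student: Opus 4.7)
The plan is to derive this directly from the Peter--Weyl Plancherel theorem for compact groups, as the preceding remark suggests, rather than invoking Theorem~\ref{15}. The starting point is the Fourier-representation form from Proposition~\ref{14}(1): for each $x \in G$ and $\pi \in \widehat{G}$,
\[
\mathcal{G}_\psi f(x,\pi) = \widehat{\mathcal{L}_x^\psi(f)}(\pi),
\]
where $\mathcal{L}_x^\psi(f)(y) = f(y)\overline{\psi(x^{-1}y)}$. Since $G$ is compact, H\"older's inequality places $\mathcal{L}_x^\psi(f)$ in $L^2(G)$ for every $x$.

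First I would fix $x$ and apply the Plancherel identity for compact groups (the final displayed formula of Section~2) to $\mathcal{L}_x^\psi(f)$, obtaining
\[
\sum_{[\pi]\in\widehat{G}} d_\pi\,\|\widehat{\mathcal{L}_x^\psi(f)}(\pi)\|_{\rm HS}^2 \;=\; \|\mathcal{L}_x^\psi(f)\|_{L^2(G)}^2 \;=\; \int_G |f(y)|^2\,|\psi(x^{-1}y)|^2\,dy.
\]
Then I would integrate in $x$; since both sides are nonnegative, Tonelli's theorem lets me interchange $\int_G dx$ with $\sum_{[\pi]}$. On the right, a second application of Fubini together with the substitution $x\mapsto yx$ (compact groups are automatically unimodular) evaluates the inner integral in $x$ as $\|\psi\|_{L^2(G)}^2$, leaving $\|f\|_{L^2(G)}^2\|\psi\|_{L^2(G)}^2$.

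The only point that needs flagging is a convention: in the compact case the Plancherel measure on $\widehat{G}$ is the weighted counting measure $\pi\mapsto d_\pi$, so the symbol $\sum_{[\pi]}\|\mathcal{G}_\psi f(x,\pi)\|_{\rm HS}^2$ in the statement is understood with this weight included. Once that is settled there is no real obstacle; the whole argument is a clean Fubini--Plancherel computation. For completeness, one may alternatively deduce the result from Theorem~\ref{15}: since $\pi(x)$ is unitary on $\mathcal{H}_\pi$, left multiplication $T\mapsto \pi(x)T$ is an isometry of ${\rm HS}(\mathcal{H}_\pi)$, so the $\mathcal{H}_{(x,\pi)}$-norm coincides with the Hilbert--Schmidt norm, and writing the direct integral over the discrete dual $\widehat{G}$ as a weighted sum recovers exactly the displayed identity.
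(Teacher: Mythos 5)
Your proposal is correct in substance, and it takes a genuinely more self-contained route than the paper: the paper offers no separate argument for this corollary at all, treating it as an immediate specialization of Theorem \ref{15} --- since each $\pi(x)$ is unitary, $\|\pi(x)T\|_{(x,\pi)}=\|T\|_{\rm HS}$, and the Plancherel measure of a compact group is the counting measure weighted by $[\pi]\mapsto d_\pi$, so the direct-integral norm collapses to the displayed sum (the preceding remark even announces that the result ``can be proved separately,'' which is precisely what you do). Your direct computation --- the Fourier-representation form $\mathcal{G}_\psi f(x,\pi)=\widehat{\mathcal{L}_x^\psi(f)}(\pi)$, Plancherel in the $\pi$-variable for fixed $x$, then Tonelli and translation/inversion invariance in $x$ --- is in effect the compact-group instance of the paper's own proof of Theorem \ref{15}, with the Peter--Weyl Parseval identity playing the role of Lipsman's general Plancherel theorem; what it buys is an elementary argument independent of the direct-integral machinery. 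Your flag about the $d_\pi$ convention is also well taken: as literally printed the corollary's sum carries no weight, and the identity is correct only if $\sum_{[\pi]}$ is read as integration against the Plancherel measure, i.e.\ with the factor $d_\pi$ included, exactly as you say.

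One local misstep needs repair: H\"older (Cauchy--Schwarz) places $\mathcal{L}_x^\psi(f)=f\cdot\overline{L_x\psi}$ in $L^1(G)$, \emph{not} in $L^2(G)$, and compactness does not help --- the product of two $L^2$ functions on a compact group need not be square-integrable (on $G=\mathbb{T}$ take $f=\psi$ given near the identity by $|t|^{-1/3}$; then $f\in L^2$ but $f\overline{\psi}\sim|t|^{-2/3}\notin L^2$). So you cannot invoke the $L^2$ Plancherel identity ``for every $x$.'' The fix is cheap and stays inside your own argument: first compute, by Tonelli and unimodularity, $\int_G\|\mathcal{L}_x^\psi(f)\|_{L^2(G)}^2\,dx=\|f\|_{L^2(G)}^2\|\psi\|_{L^2(G)}^2<\infty$, which shows $\mathcal{L}_x^\psi(f)\in L^2(G)$ for almost every $x\in G$; then apply Plancherel only for those $x$ (or appeal to the extended Parseval identity for $L^1$ functions on a compact group, in which both sides may simultaneously be infinite). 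Note there is no circularity, since the finiteness of the double integral is obtained without Plancherel. With that adjustment your argument is complete.
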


To prove an inversion formula, we need some notations.
Let $\psi$ be a window function and $K\in\mathcal{H}^2(G\times\widehat{G})$. The conjugate linear functional
$$\ell_\psi^K(g)=\int_{G\times\widehat{G}}{\rm tr}[K(y,\pi)\mathcal{G}_\psi g(y,\pi)^*]d\sigma(y,\pi),$$
is a bounded functional on $L^2(G)$. Indeed, using Cauchy-Schwartz inequality and Theorem \ref{15} we have
\begin{align*}
|\ell_\psi^K(g)|&=\left|\int_{G\times\widehat{G}}{\rm tr}[K(y,\pi)\mathcal{G}_\psi g(y,\pi)^*]d\sigma(y,\pi)\right|
\\&\le\int_{G\times\widehat{G}}\left|{\rm tr}[K(y,\pi)\mathcal{G}_\psi g(y,\pi)^*]\right|d\sigma(y,\pi)
\\&\le\|K\|_{\mathcal{H}^2(G\times\widehat{G})}\|\mathcal{G}_\psi g\|_{\mathcal{H}^2(G\times\widehat{G})}
=\|K\|_{\mathcal{H}^2(G\times\widehat{G})}\|\psi\|_{L^2(G)}\|g\|_{L^2(G)}.
\end{align*}
This shows that $\ell_\psi^K$ defines a unique element in $L^2(G)$, which we use the notation
$$\int_{G\times\widehat{G}}{\rm tr}[K(y,\pi)M_\pi(L_y\psi)]d\sigma(y,\pi),$$
for this element of $L^2(G)$. According to this notation, for each $g\in L^2(G)$ we have
$$\left\langle\int_{G\times\widehat{G}}{\rm tr}[K(y,\pi)M_\pi(L_y\psi)]d\sigma(y,\pi),g\right\rangle_{L^2(G)}=\int_{G\times\widehat{G}}{\rm tr}[K(y,\pi)\mathcal{G}_\psi g(y,\pi)^*]d\sigma(y,\pi).$$

In the next theorem we prove an inversion formula.
\begin{theorem}\label{17}
{\it Let $\psi,\varphi$ be two window functions such that $\langle\varphi,\psi\rangle_{L^2(G)}\not=0$. Then, for each $f\in L^2(G)$ we have
$$f=\langle\varphi,\psi\rangle^{-1}_{L^2(G)}\int_{G\times\widehat{G}}{\rm tr}[\mathcal{G}_\psi f(y,\pi)M_\pi(L_y\varphi)]d\sigma(y,\pi).$$
}\end{theorem}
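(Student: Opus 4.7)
The plan is to unwind the weak definition of the integral on the right-hand side and reduce the statement to the orthogonality relation in Corollary \ref{16}. Concretely, set
$$h := \int_{G\times\widehat{G}}{\rm tr}[\mathcal{G}_\psi f(y,\pi) M_\pi(L_y\varphi)]\,d\sigma(y,\pi) \in L^2(G),$$
interpreted in the sense discussed in the paragraph preceding the theorem (with the roles of $\psi$ and $\varphi$ adjusted and with $K = \mathcal{G}_\psi f \in \mathcal{H}^2(G\times\widehat{G})$; note $K$ lies in this space by Theorem \ref{15}). The goal reduces to showing $h = \langle\varphi,\psi\rangle_{L^2(G)}\, f$, and then dividing by the nonzero scalar $\langle\varphi,\psi\rangle_{L^2(G)}$.

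To show this identity, I would test $h$ against an arbitrary $g\in L^2(G)$. By the defining property of $h$ (the Riesz correspondence established just before the theorem statement), we have
$$\langle h,g\rangle_{L^2(G)} = \int_{G\times\widehat{G}}{\rm tr}\bigl[\mathcal{G}_\psi f(y,\pi)\, \mathcal{G}_\varphi g(y,\pi)^*\bigr]\,d\sigma(y,\pi).$$
Using cyclicity of the trace, the integrand equals ${\rm tr}\bigl[\mathcal{G}_\varphi g(y,\pi)^*\mathcal{G}_\psi f(y,\pi)\bigr]$, which is precisely the integrand appearing in the inner product of $\mathcal{H}^2(G\times\widehat{G})$ defined in Section 3. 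Hence
$$\langle h,g\rangle_{L^2(G)} = \langle \mathcal{G}_\psi f, \mathcal{G}_\varphi g\rangle_{\mathcal{H}^2(G\times\widehat{G})}.$$

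Now applying Corollary \ref{16} (the orthogonality relation) to the right-hand side gives
$$\langle h,g\rangle_{L^2(G)} = \langle\varphi,\psi\rangle_{L^2(G)}\,\langle f,g\rangle_{L^2(G)} = \bigl\langle \langle\varphi,\psi\rangle_{L^2(G)} f,\, g\bigr\rangle_{L^2(G)}.$$
Since this holds for every $g\in L^2(G)$, uniqueness in the Riesz representation yields $h = \langle\varphi,\psi\rangle_{L^2(G)}\, f$, and the desired inversion formula follows.

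The main obstacle is essentially bookkeeping rather than analysis: one must carefully track the conjugate-linear versus linear convention used in defining the weak integral $\int{\rm tr}[K(y,\pi)M_\pi(L_y\varphi)]d\sigma$, and verify that the cyclicity step correctly converts the pairing from the definition of $h$ into the canonical inner product on $\mathcal{H}^2(G\times\widehat{G})$. All the genuine analytic content—measurability of the fields of operators, integrability of the trace, and the isometry property—has already been absorbed into Theorem \ref{15} and Corollary \ref{16}, so no further estimates or Fubini-style arguments are required.
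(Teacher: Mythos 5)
Your proposal is correct and follows essentially the same route as the paper: both interpret the right-hand side weakly via the bounded conjugate-linear functional $\ell_\varphi^{K}$ with $K=\mathcal{G}_\psi f$ (justified by Theorem \ref{15}), pair against an arbitrary $g\in L^2(G)$ to identify the pairing with $\langle\mathcal{G}_\psi f,\mathcal{G}_\varphi g\rangle_{\mathcal{H}^2(G\times\widehat{G})}$, and conclude via the orthogonality relation of Corollary \ref{16}. Your explicit remark about cyclicity of the trace merely spells out a step the paper leaves implicit, so there is no substantive difference.
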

\begin{proof}
By Theorem \ref{15}, we have $\mathcal{G}_\psi f\in\mathcal{H}^2(G\times\widehat{G})$. As we mentioned above, the integral
$$\langle\varphi,\psi\rangle^{-1}_{L^2(G)}\int_{G\times\widehat{G}}{\rm tr}[\mathcal{G}_\psi f(y,\pi)M_\pi(L_y\varphi)]d\sigma(y,\pi),$$
is a well-defined function in $L^2(G)$, for convenience in calculations we use $f_{\psi}^\varphi$ for this function.
Using Corollary \ref{16}, for each $g\in L^2(G)$ we have
\begin{align*}
\langle f_{\psi}^\varphi,g\rangle_{L^2(G)}
&=\langle\varphi,\psi\rangle^{-1}_{L^2(G)}\int_{G\times\widehat{G}}{\rm tr}[\mathcal{G}_\psi f(y,\pi)\mathcal{G}_\varphi g(y,\pi)^*]d\sigma(y,\pi)
\\&=\langle\varphi,\psi\rangle^{-1}_{L^2(G)}\langle\mathcal{G}_\psi f,\mathcal{G}_\varphi g\rangle_{\mathcal{H}^2(G\times\widehat{G})}
\\&=\langle f,g\rangle_{L^2(G)}.
\end{align*}
Which follows $f=f_{\psi}^\varphi$ and so the inversion formula holds.
\end{proof}
As an immediate consequence of Theorem \ref{17} we have the following corollary.
\begin{corollary}
{\it Let $\psi$ be a window function such that $\|\psi\|_{L^2(G)}=1$. Then, for each $f\in L^2(G)$ we have
$$f=\int_{G\times\widehat{G}}{\rm tr}[\mathcal{G}_\psi f(y,\pi)M_\pi(L_y\psi)]d\sigma(y,\pi).$$
}\end{corollary}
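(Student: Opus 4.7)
The plan is to deduce this corollary directly from Theorem \ref{17} by specializing the two window functions to be the same. Concretely, I would set $\varphi = \psi$ in the statement of Theorem \ref{17}. The hypothesis that $\langle\varphi,\psi\rangle_{L^2(G)}\not=0$ is then satisfied automatically, since $\langle\psi,\psi\rangle_{L^2(G)} = \|\psi\|_{L^2(G)}^2 = 1 \neq 0$ by assumption.

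With this substitution, the normalization factor $\langle\varphi,\psi\rangle_{L^2(G)}^{-1}$ collapses to $1$, and Theorem \ref{17} immediately yields
\[
f = \int_{G\times\widehat{G}}{\rm tr}[\mathcal{G}_\psi f(y,\pi)M_\pi(L_y\psi)]\,d\sigma(y,\pi),
\]
which is exactly the desired identity. No further calculation is needed, as the well-definedness of the right-hand side as an element of $L^2(G)$ was already established in the discussion preceding Theorem \ref{17} using Cauchy--Schwartz together with Theorem \ref{15}.

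There is essentially no obstacle here; the only thing to verify is that the normalization condition $\|\psi\|_{L^2(G)}=1$ is precisely what makes the scalar factor in Theorem \ref{17} equal to $1$. Thus the entire proof consists of one sentence invoking Theorem \ref{17} with $\varphi=\psi$.
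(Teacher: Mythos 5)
Your proposal is correct and coincides with the paper's intent: the paper presents this corollary as an immediate consequence of Theorem \ref{17}, obtained precisely by taking $\varphi=\psi$ so that $\langle\psi,\psi\rangle_{L^2(G)}=\|\psi\|_{L^2(G)}^2=1$ makes the normalizing scalar equal to $1$. Nothing further is required.
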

Also when $G$ is compact the inversion theorem can be deduced more applicable in the following corollary.
\begin{corollary}
{\it Let $G$ be a compact group and $\psi,\varphi$ be two window functions such that $\langle\varphi,\psi\rangle_{L^2(G)}\not=0$. Then, for each $f\in L^2(G)$ we have
$$f(x)=\langle\varphi,\psi\rangle^{-1}_{L^2(G)}\int_{G}\sum_{[\pi]\in\widehat{G}}d_\pi{\rm tr}[\mathcal{G}_\psi f(y,\pi)M_\pi(L_y\varphi)(x)]dy
\hspace{0.25cm}{\rm for\ a.e}\hspace{0.25cm} x\in G.$$}
\end{corollary}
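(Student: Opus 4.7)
The plan is to obtain this corollary as a direct specialization of Theorem \ref{17} to the compact case. The key ingredient is the explicit form of the Plancherel measure on $\widehat{G}$ when $G$ is compact: comparing Segal's Plancherel formula with the Peter--Weyl/Parseval identity (\ref{5}), one sees that $d\pi$ must be the weighted counting measure on $\widehat{G}$ assigning mass $d_\pi$ to each class $[\pi]$. Consequently, for any integrable measurable field $F$ over $\widehat{G}$, one has $\int_{\widehat{G}} F(\pi)\, d\pi = \sum_{[\pi]\in\widehat{G}} d_\pi F(\pi)$.

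First I would invoke Theorem \ref{17}, which gives
\begin{align*}
f = \langle\varphi,\psi\rangle^{-1}_{L^2(G)} \int_{G\times\widehat{G}} {\rm tr}\bigl[\mathcal{G}_\psi f(y,\pi) M_\pi(L_y\varphi)\bigr]\, d\sigma(y,\pi)
\end{align*}
as an identity in $L^2(G)$, interpreted in the weak (pairing) sense discussed just before that theorem. Using $d\sigma = dy\otimes d\pi$ together with the explicit form of $d\pi$ in the compact case, the $\widehat{G}$-integral rewrites as the weighted sum $\sum_{[\pi]} d_\pi(\cdots)$, yielding the formula of the corollary but still with the right-hand side understood as an $L^2(G)$-element defined through duality against $L^2(G)$.

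Second I would unpack the pointwise meaning of this $L^2$-element. Unwinding the modulation, $M_\pi(L_y\varphi)(x) = \varphi(y^{-1}x)\pi(x)$ is a bounded (hence finite-rank Hilbert--Schmidt) operator on the finite-dimensional $\mathcal{H}_\pi$, so ${\rm tr}[\mathcal{G}_\psi f(y,\pi)\,M_\pi(L_y\varphi)(x)] = \varphi(y^{-1}x)\,{\rm tr}[\mathcal{G}_\psi f(y,\pi)\pi(x)]$ is a well-defined scalar function of $x$. Since equality in $L^2(G)$ is equality almost everywhere, once the right-hand side is identified pointwise with the $L^2$-element produced by the weak integral, the corollary follows.

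The main obstacle, therefore, is justifying the exchange of pointwise $x$-evaluation with the $y$-integration and $[\pi]$-summation. I would handle this by Fubini, using the absolute integrability bound
$$
\bigl|{\rm tr}[\mathcal{G}_\psi f(y,\pi)M_\pi(L_y\varphi)(x)]\bigr| \le \|\mathcal{G}_\psi f(y,\pi)\|_{\rm HS}\,\|M_\pi(L_y\varphi)(x)\|_{\rm HS},
$$
which together with the Cauchy--Schwarz estimate employed in bounding $\ell_\psi^K$ preceding Theorem \ref{17} shows that the sum-integral on the right converges absolutely for a.e.\ $x$ and coincides with the $L^2$-element given by Theorem \ref{17}. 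This yields the claimed pointwise a.e.\ identity.
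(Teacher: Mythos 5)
Your first two steps reconstruct exactly what the paper intends: the paper states this corollary with no proof at all beyond declaring it a consequence of Theorem \ref{17}, and the intended derivation is precisely your specialization of the Plancherel measure to the weighted counting measure assigning mass $d_\pi$ to each $[\pi]$ (forced by comparing Segal's formula with the Parseval identity (\ref{5})), followed by unwinding $M_\pi(L_y\varphi)(x)=\varphi(y^{-1}x)\pi(x)$. Up to that point your proposal matches the paper's (implicit) argument.

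The genuine gap is in your third step, and it is precisely the step the corollary's ``for a.e.\ $x$'' phrasing requires. Your bound gives $\|M_\pi(L_y\varphi)(x)\|_{\rm HS}=|\varphi(y^{-1}x)|\,d_\pi^{1/2}$, since $\pi(x)$ is unitary on a $d_\pi$-dimensional space; hence your proposed majorant for the sum-integral is
$$\int_G|\varphi(y^{-1}x)|\sum_{[\pi]\in\widehat{G}}d_\pi^{3/2}\,\|\mathcal{G}_\psi f(y,\pi)\|_{\rm HS}\,dy,$$
and the only global control available is the Plancherel identity $\int_G\sum_{[\pi]}d_\pi\|\mathcal{G}_\psi f(y,\pi)\|_{\rm HS}^2\,dy=\|f\|_{L^2(G)}^2\|\psi\|_{L^2(G)}^2$. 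Cauchy--Schwarz against this leaves the cofactor $\bigl(\sum_{[\pi]}d_\pi^2\bigr)^{1/2}$, which diverges whenever $\widehat{G}$ is infinite; the estimate used for $\ell_\psi^K$ before Theorem \ref{17} does not help either, because it controls pairings of $\mathcal{G}_\psi f$ against another Gabor transform in $\mathcal{H}^2(G\times\widehat{G})$, not the pointwise evaluation $x\mapsto{\rm tr}[\,\cdot\,\pi(x)]$, which is not realized by pairing with an $\mathcal{H}^2$ element. Concretely, for fixed $y$ the inner sum $\sum_{[\pi]}d_\pi\,{\rm tr}[\mathcal{G}_\psi f(y,\pi)\pi(x)]$ is the formal Fourier inversion at $x$ of the function $\mathcal{L}_y^\psi(f)=f\cdot\overline{L_y\psi}$, which for general $f,\psi\in L^2(G)$ is merely in $L^1(G)$, and Fourier series of $L^1$ functions on compact groups need not converge absolutely or even pointwise a.e.\ (Segal's inversion requires membership in $\mathcal{J}^2(G)$). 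So your Fubini argument fails as stated, and with it the upgrade from the weak identity of Theorem \ref{17} to the claimed pointwise formula. A correct treatment must either keep the right-hand side in the duality sense of Theorem \ref{17} (pair with $g\in L^2(G)$, use $\mathcal{G}_\varphi g(y,\pi)^*=\int_G\overline{g(x)}\varphi(y^{-1}x)\pi(x)\,dx$ and the orthogonality relation of Corollary \ref{16}) or impose hypotheses guaranteeing a.e.\ Fourier inversion for each $\mathcal{L}_y^\psi(f)$ --- a caveat the paper itself silently elides by offering no proof.
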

The following proposition gives us a useful relation of Gabor transform of two non-orthogonal window functions.
\begin{proposition}\label{2000}
{\it For window functions $\psi,\varphi$ with $\langle\varphi,\psi\rangle_{L^2(G)}\not=0$, we have $\mathcal{G}_\varphi^*\mathcal{G}_\psi=\langle\varphi,\psi\rangle_{L^2(G)} I_{L^2(G)}.$
}\end{proposition}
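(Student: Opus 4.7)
The plan is to reduce the desired operator identity to a scalar identity via the orthogonality relation in Corollary \ref{16}. Since $\mathcal{G}_\varphi: L^2(G) \to \mathcal{H}^2(G \times \widehat{G})$ is bounded (by Theorem \ref{15} it is $\|\varphi\|_{L^2(G)}$ times an isometry), its Hilbert space adjoint $\mathcal{G}_\varphi^*: \mathcal{H}^2(G \times \widehat{G}) \to L^2(G)$ is a well-defined bounded operator, and hence the composition $\mathcal{G}_\varphi^* \mathcal{G}_\psi : L^2(G) \to L^2(G)$ makes sense. To identify it with $\langle \varphi, \psi \rangle_{L^2(G)}\, I_{L^2(G)}$, it suffices to compare both sides against arbitrary elements of $L^2(G)$ via the inner product.

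First I would fix $f, g \in L^2(G)$ and use the defining property of the adjoint to write $\langle \mathcal{G}_\varphi^* \mathcal{G}_\psi f, g \rangle_{L^2(G)} = \langle \mathcal{G}_\psi f, \mathcal{G}_\varphi g \rangle_{\mathcal{H}^2(G \times \widehat{G})}$. Then I would apply Corollary \ref{16} directly to the right-hand side, obtaining $\langle \varphi, \psi \rangle_{L^2(G)} \langle f, g \rangle_{L^2(G)}$, which by linearity of the inner product in its first argument equals $\langle \langle \varphi, \psi \rangle_{L^2(G)} f, g \rangle_{L^2(G)}$. Since $g$ is arbitrary, this forces $\mathcal{G}_\varphi^* \mathcal{G}_\psi f = \langle \varphi, \psi \rangle_{L^2(G)} f$ for every $f \in L^2(G)$, which is the claimed operator equality.

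There is essentially no serious obstacle here: once Corollary \ref{16} is available, the statement is a routine unpacking of the adjoint. The only things worth double-checking are the sesquilinearity conventions (the $L^2(G)$ inner product is linear in the first slot so that the scalar $\langle \varphi, \psi \rangle_{L^2(G)}$ factors through correctly) and the fact that one may legitimately test the operator identity against all $g \in L^2(G)$. I would also remark that the hypothesis $\langle \varphi, \psi \rangle_{L^2(G)} \neq 0$ plays no role in the argument itself; it is stated because this proposition is the natural reformulation of the inversion formula in Theorem \ref{17}, where the non-orthogonality is needed to recover $f$ as $\langle \varphi, \psi \rangle_{L^2(G)}^{-1} \mathcal{G}_\varphi^* \mathcal{G}_\psi f$.
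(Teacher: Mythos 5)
Your proof is correct, and it takes a genuinely more direct route than the paper's. The paper first introduces the synthesis operator $S_\varphi:\mathcal{H}^2(G\times\widehat{G})\to L^2(G)$ given (in the weak sense fixed before Theorem \ref{17}) by $S_\varphi(K)=\int_{G\times\widehat{G}}{\rm tr}[K(y,\pi)M_\pi(L_y\varphi)]\,d\sigma(y,\pi)$, verifies through the pairing identity $\langle S_\varphi(K),f\rangle_{L^2(G)}=\langle K,\mathcal{G}_\varphi f\rangle_{\mathcal{H}^2(G\times\widehat{G})}$ that $S_\varphi=\mathcal{G}_\varphi^*$, and then quotes the inversion formula of Theorem \ref{17} to conclude $\mathcal{G}_\varphi^*\mathcal{G}_\psi f=S_\varphi(\mathcal{G}_\psi f)=\langle\varphi,\psi\rangle_{L^2(G)}f$. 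You never identify the adjoint concretely: you combine the defining property of $\mathcal{G}_\varphi^*$ with the orthogonality relation of Corollary \ref{16} and test against arbitrary $g\in L^2(G)$. Since Theorem \ref{17} is itself proved by exactly this computation (the function $f_\psi^\varphi$ there is tested against $g$ via Corollary \ref{16}), the two arguments have identical mathematical content, and yours simply inlines the inversion theorem while skipping the explicit realization of the adjoint. What the paper's detour buys is precisely that realization: $\mathcal{G}_\varphi^*$ exhibited as the weak operator-valued integral $S_\varphi$, i.e.\ the reconstruction operator, which is of independent interest and is what makes the subsequent corollary (the resolution of the identity for compact groups) readable. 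What your route buys is brevity and transparency, including your correct closing observation that the hypothesis $\langle\varphi,\psi\rangle_{L^2(G)}\neq 0$ is never used, so that $\mathcal{G}_\varphi^*\mathcal{G}_\psi=\langle\varphi,\psi\rangle_{L^2(G)}I_{L^2(G)}$ holds for arbitrary window functions; the non-orthogonality is needed only where one divides by $\langle\varphi,\psi\rangle_{L^2(G)}$, as in Theorem \ref{17}.
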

\begin{proof}
Let $S_\varphi:\mathcal{H}^2(G\times\widehat{G})\to L^2(G)$ be the bounded linear operator defined by
$$S_\varphi(K)=\int_{G\times\widehat{G}}{\rm tr}[K(y,\pi)M_\pi(L_y\varphi)]d\sigma(y,\pi).$$
Then $S_\varphi$ is the adjoint operator of $\mathcal{G}_\varphi$. In fact, using Proposition \ref{14}, for each $f\in L^2(G)$ and $K\in \mathcal{H}^2(G\times\widehat{G})$ we have
\begin{align*}
\langle S_\varphi(K),f\rangle_{L^2(G)}
&=\int_{G\times\widehat{G}}{\rm tr}[K(y,\pi)\mathcal{G}_\varphi f(y,\pi)^*]d\sigma(y,\pi)
\\&=\langle K,\mathcal{G}_\varphi f\rangle_{\mathcal{H}^2(G\times\widehat{G})}=\langle\mathcal{G}_\varphi^*(K),f\rangle_{L^2(G)}.
\end{align*}
Now, Theorem \ref{17} shows that $\mathcal{G}_\varphi^*\mathcal{G}_\psi=\langle\varphi,\psi\rangle_{L^2(G)} I_{L^2(G)}.$
\end{proof}

Let $\phi,\phi'\in L^2(G,\mathcal{B}(\mathcal{H}_\pi))$ and also let the operator
$\phi\otimes_\pi\phi':L^2(G)\to L^2(G,\mathcal{B}(\mathcal{H}_\pi))$ defined by
$$f\mapsto\phi\otimes_\pi\phi'(f)=\langle f,\phi'\rangle_\pi\phi.$$
Now Proposition (\ref{2000}) can be considered as the following continuous resolution of the identity operator for compact groups.
\begin{corollary}
{\it Let $G$ be a compact group and $\psi,\varphi$ window functions with $\langle\varphi,\psi\rangle_{L^2(G)}\not=0$. Then, we have
$$I_{L^2(G)}=\langle\varphi,\psi\rangle^{-1}_{L^2(G)}\int_{G}\sum_{[\pi]\in\widehat{G}}d_\pi{\rm tr}[M_\pi(L_y\varphi)\otimes_\pi M_\pi(L_y\psi)]dy,$$
where the right integral is a notation for the bounded linear operator defined on $L^2(G)$ by
$$f\mapsto \int_{G}\sum_{[\pi]\in\widehat{G}}d_\pi{\rm tr}[M_\pi(L_y\varphi)\otimes_\pi M_\pi(L_y\psi)(f)]dy.$$
}\end{corollary}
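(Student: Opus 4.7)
The plan is to derive this resolution of the identity directly from the compact-group inversion corollary stated immediately above (equivalently, from Proposition~\ref{2000} specialized to compact $G$), by rewriting the integrand in a form that exhibits the operator $\otimes_\pi$. In the compact setting the Plancherel measure on $\widehat{G}$ reduces to counting measure weighted by the dimensions $d_\pi$, so every symbol $\int_{\widehat{G}}\cdots\,d\pi$ can be rewritten as $\sum_{[\pi]\in\widehat{G}}d_\pi\cdots$, and the preceding corollary reads, for every $f\in L^2(G)$ and almost every $x\in G$,
$$f(x)=\langle\varphi,\psi\rangle^{-1}_{L^2(G)}\int_{G}\sum_{[\pi]\in\widehat{G}}d_\pi\,{\rm tr}\bigl[\mathcal{G}_\psi f(y,\pi)M_\pi(L_y\varphi)(x)\bigr]\,dy.$$
So the task is reduced to the algebraic identification of the integrand with ${\rm tr}\bigl[M_\pi(L_y\varphi)\otimes_\pi M_\pi(L_y\psi)(f)(x)\bigr]$.

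Next I would unfold $\otimes_\pi$ using Proposition~\ref{14}(3), which gives $\mathcal{G}_\psi f(y,\pi)=\langle f,M_\pi(L_y\psi)\rangle_\pi$. Directly from the definition of $\phi\otimes_\pi\phi'$,
$$\bigl(M_\pi(L_y\varphi)\otimes_\pi M_\pi(L_y\psi)\bigr)(f)=\langle f,M_\pi(L_y\psi)\rangle_\pi\,M_\pi(L_y\varphi)=\mathcal{G}_\psi f(y,\pi)\,M_\pi(L_y\varphi),$$
which is an element of $L^2(G,\mathcal{B}(\mathcal{H}_\pi))$, i.e.\ the function $x\mapsto\mathcal{G}_\psi f(y,\pi)\cdot(L_y\varphi)(x)\pi(x)$. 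Evaluating at $x$ and taking the operator trace on $\mathcal{H}_\pi$ (which is finite-dimensional thanks to compactness) gives exactly ${\rm tr}[\mathcal{G}_\psi f(y,\pi)M_\pi(L_y\varphi)(x)]$. Substituting into the inversion formula produces
$$f=\langle\varphi,\psi\rangle^{-1}_{L^2(G)}\int_{G}\sum_{[\pi]\in\widehat{G}}d_\pi\,{\rm tr}\bigl[M_\pi(L_y\varphi)\otimes_\pi M_\pi(L_y\psi)(f)\bigr]\,dy,$$
valid for every $f\in L^2(G)$, which is precisely the asserted identity $I_{L^2(G)}=\cdots$ under the convention for the right-hand side spelled out in the statement.

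The main point deserving care is conceptual rather than computational: one must interpret the symbolic trace correctly by first applying $M_\pi(L_y\varphi)\otimes_\pi M_\pi(L_y\psi)$ to $f$ to obtain an element of $L^2(G,\mathcal{B}(\mathcal{H}_\pi))$, then taking the pointwise (in $x$) trace and only afterwards integrating in $y$ and summing over $[\pi]\in\widehat{G}$. Absolute convergence of the resulting double integral--sum follows from Cauchy--Schwarz together with Theorem~\ref{15} applied to $\mathcal{G}_\psi f$ and $\mathcal{G}_\varphi g$ for arbitrary test $g\in L^2(G)$, so the interchange is automatic. After those bookkeeping observations, the corollary is a direct reformulation of the preceding one.
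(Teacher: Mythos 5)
Your proposal is correct and follows essentially the same route as the paper: the paper offers no separate argument for this corollary, presenting it as a direct reformulation of Proposition~\ref{2000} (equivalently, of the compact-group inversion corollary following Theorem~\ref{17}), and your unfolding of $\otimes_\pi$ via Proposition~\ref{14}(3) to identify $\bigl(M_\pi(L_y\varphi)\otimes_\pi M_\pi(L_y\psi)\bigr)(f)=\mathcal{G}_\psi f(y,\pi)\,M_\pi(L_y\varphi)$ is exactly the intended identification, with the compact case supplying finite-dimensional $\mathcal{H}_\pi$ and the weighted counting Plancherel measure $\sum_{[\pi]}d_\pi$. Your closing remarks on interpreting the trace pointwise in $x$ before integrating, and on convergence via Cauchy--Schwarz and Theorem~\ref{15}, only make explicit the bookkeeping the paper leaves implicit.
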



\section{\bf Examples}

As the first example we study the theory on the Heisenberg group. In quantum mechanics, Weyl-quantization (phase-space quantization) is a method for systematically associating a quantum mechanical Hermitian operator with a classical kernel function in phase space (see \cite{Weyl, Zach}). The Heisenberg group which is associated to n-dimensional quantum
mechanical systems plays an important role in the Weyl-quantization.

\subsection{Heisenberg group}

We start with the definitions and some description of the Heisenberg groups. The Heisenberg group $\mathbb{H}^n$ is a Lie group with the underlying manifold $\mathbb{R}^{d}$, where $d=2n+1$.
We denote points in $\mathbb{H}^n$ by ${\bf h}=(t,{\bf q},{\bf p})$ with $t\in\mathbb{R}$, ${\bf q},{\bf p}\in\mathbb{R}^n$ and define the group operation by
$$(t_1,{\bf q}_1,{\bf p}_1)(t_2,{\bf q}_2,{\bf p}_2)=
(t_1+t_2+\frac{1}{2}({\bf p}_1.{\bf q}_2-{\bf p}_2.{\bf q}_1),
{\bf q}_1+{\bf q}_2,{\bf p}_1+{\bf p}_2).$$
It is easy to justify that this is a group operation, with the identity element
$0=(0,\underline{0},\underline{0})$ and also the inverse of $(t,{\bf q},{\bf p})$ is given by
$(-t,-{\bf q},-{\bf p})$. It can be checked that Lebesgue measure on $\mathbb{R}^{2n+1}=\mathbb{H}^n$ is left and right invariant under the group action defined as above.
Hence, Lebesgue measure on $\mathbb{R}^{2n+1}$ gives the Haar measure on $\mathbb{H}^n$, and this group is unimodular. Taylor in \cite{Ta} proved that the following map
$$\varrho(t,{\bf q},{\bf p})f({\bf x})
=e^{i(t+{\bf q}.{\bf x}+{\bf q}.{\bf p}/2)}f({\bf x}+{\bf p}),\ {\rm for\ each}\ f\in L^2(\mathbb{R}^n),$$
is an irreducible unitary representation of $\mathbb{H}^n$ on the Hilbert space $L^2(\mathbb{R}^n)$. Now, for each $\lambda\not=0$, the map
$\delta_\lambda(t,{\bf q},{\bf p})=(\lambda t,{\rm sign}\lambda|\lambda|^{1/2}{\bf q},|\lambda|^{1/2}{\bf p}),$
is an automorphism of $\mathbb{H}^n$ and so $\pi_\lambda({\bf h}):=\varrho(\delta_\lambda({\bf h}))$ defines an
irreducible representation of $\mathbb{H}^n$ on the Hilbert space $L^2(\mathbb{R}^n)$. Each representation $\pi_\lambda$ is given explicitly on $L^2(\mathbb{R}^n)$ by
$$\pi_\lambda(t,{\bf q},{\bf p})f({\bf x})=e^{i\left(t\lambda+|\lambda|^{1/2}{\rm sign}\lambda{\bf q}.{\bf x}+\lambda{\bf q}.{\bf p}/2\right)}f({\bf x}+|\lambda|^{1/2}{\bf p}).$$
Except for the infinite-dimensional irreducible representations above, there are also the following one-dimensional irreducible representations of $\mathbb{H}^n$; $\pi_{(\underline{\xi},\underline{\eta})}(t,{\bf q},{\bf p})=e^{i(\underline{\xi}.{\bf q}+\underline{\eta}.{\bf p})}$.
Stone-von Neumann theorem and Kirillov theory imply that these two classes of the representations of $\mathbb{H}^n$, exhaust the irreducible representations of $\mathbb{H}^n$[see \cite{CG}]. Also, no two different representations of these two classes are unitarily equivalent.
Hence, we can say that
$$\widehat{\mathbb{H}^n}=\{\pi_\lambda|\lambda\in\mathbb{R}\backslash\{0\}\}
\bigcup\{\pi_{(\underline{\xi},\underline{\eta})}|(\underline{\xi},\underline{\eta})\in\mathbb{R}^n\times\mathbb{R}^n\}.$$
We associate to a function $f$ in $L^1(\mathbb{H}^n)\cap L^2(\mathbb{H}^n)$, the Fourier transform
$$\widehat{f}(\pi_{(\underline{\xi},\underline{\eta})})
=\int_{\mathbb{H}^n}f(t,{\bf q},{\bf p})e^{-i(\underline{\xi}.{\bf q}+
\underline{\eta}.{\bf p})}d(t,{\bf q},{\bf p}),\hspace{1cm}
\widehat{f}(\pi_\lambda)=\int_{\mathbb{H}^n}f(t,{\bf q},{\bf p})\pi_\lambda(t,{\bf q},{\bf p})^*d(t,{\bf q},{\bf p}).$$
For convenience, we write $\widehat{f}(\lambda)$ instead of $\widehat{f}(\pi_\lambda)$ and also $\widehat{f}(\underline{\xi},\underline{\eta})$ instead of $\widehat{f}(\pi_{(\underline{\xi},\underline{\eta})})$.
Then, the Plancherel theorem for the Heisenberg group $\mathbb{H}^n$ is given by
$$\int_{\mathbb{H}^n}|f(t,{\bf q},{\bf p})|^2d(t,{\bf q},{\bf p})=(2\pi)^{-(n+1)}\int_{-\infty}^{+\infty}\|\widehat{f}(\lambda)\|_{\rm HS}^2|\lambda|^nd\lambda.$$
Also, the following inversion Fourier transform for the Heisenberg group $\mathbb{H}^n$ holds;
$$f(t,{\bf q},{\bf p})=(2\pi)^{-(n+1)}\int_{-\infty}^{+\infty}{\rm tr}[\pi_\lambda(t,{\bf q},{\bf p})\widehat{f}(\lambda)]|\lambda|^nd\lambda.$$
Note that, the representations of the form $\pi_{(\underline{\xi},\underline{\eta})}$ have no contribution to the Plancherel formula and Fourier inversion transform because this set of representations has zero Plancherel measure. In other words, Plancherel measure $d\pi$ on $\widehat{\mathbb{H}^n}$ is given by $d\pi_\lambda=|\lambda|^nd\lambda$ and $d\pi_{(\underline{\xi},\underline{\eta})}=0$.
That is, the measure $(2\pi)^{-(n+1)}|\lambda|^n$ on $\mathbb{R}\backslash\{0\}$ is the Plancherel measure on $\widehat{\mathbb{H}^n}$.

\begin{example}
Let $\psi(t,{\bf q},{\bf p})=2^{d/4}e^{-\pi\|(t,{\bf q},{\bf p})\|^2}$ be the classical Gaussian window function on $\mathbb{H}^n$.

The associated continuous Gabor transform of $f\in\mathcal{C}_c(\mathbb{H}^n)$ with respect to the window function $\psi$ at each $(t,{\bf q},{\bf p},\pi_\lambda)\in \mathbb{H}^n\times\widehat{\mathbb{H}^n}$
can be identified by
\begin{align*}
\langle g,\mathcal{G}_\psi f(t,{\bf q},{\bf p},\pi_\lambda)k\rangle
&=\int_{\mathbb{H}^n}\mathcal{L}_{(t,{\bf q},{\bf p})}^\psi(f)(t',{\bf q'},{\bf p'})
\langle g,\pi_\lambda(t',{\bf q'},{\bf p'})^*k\rangle d(t',{\bf q}',{\bf p}')
\\&=\int_{\mathbb{H}^n}\int_{\mathbb{R}^n}\mathcal{L}_{(t,{\bf q},{\bf p})}^\psi(f)(t',{\bf q'},{\bf p'})
e^{i\left(t\lambda+|\lambda|^{1/2}{\rm sign}\lambda{\bf q}.{\bf{x}}+\lambda{\bf q}.{\bf p}/2\right)}
g({\bf{x}}+|\lambda|^{1/2}{\bf p})\overline{k({\bf x})}d{\bf x} d(t',{\bf q}',{\bf p}'),
\end{align*}
for each $g,k\in L^2(\mathbb{R}^n)$.

Similarly, the continuous Gabor transform at each $(t,{\bf q},{\bf p},\pi_{(\underline{\xi},\underline{\eta})})\in \mathbb{H}^n\times\widehat{\mathbb{H}^n}$ can be identified by
\begin{align*}
\mathcal{G}_\psi f(t,{\bf q},{\bf p},\pi_{(\underline{\xi},\underline{\eta})})
&=\int_{\mathbb{H}^n}\mathcal{L}_{(t,{\bf q},{\bf p})}^\psi(f)(t',{\bf q'},{\bf p'})
\pi_{(\underline{\xi},\underline{\eta})}(t',{\bf q'},{\bf p'})^* d(t',{\bf q}',{\bf p}')
\\&=\int_{\mathbb{H}^n}\mathcal{L}_{(t,{\bf q},{\bf p})}^\psi(f)(t',{\bf q'},{\bf p'})
e^{-i(\underline{\xi}.{\bf q}+\underline{\eta}.{\bf p})}d(t',{\bf q}',{\bf p}'),
\end{align*}
where $\mathcal{L}_{(t,{\bf q},{\bf p})}^\psi(f)(t',{\bf q'},{\bf p'})=2^{d/4}f(t',{\bf q}',{\bf p}')e^{-\pi(|t+t'+\frac{1}{2}({\bf p}.{\bf q'}-{\bf p'}.{\bf q})|^2+\|{\bf q}+{\bf q'}\|^2+\|{\bf p}+{\bf p'}\|^2)}.$
\end{example}

Using Theorem \ref{17}, for each $f\in L^2(\mathbb{H}^n)$ we can reconstruct $f$ via
\begin{align*}
\langle f,k\rangle_{L^2(G)}
&=\int_{\mathbb{H}^n}\int_{\widehat{\mathbb{H}^n}}{\rm tr}[\mathcal{F}\left(\mathcal{L}_{(t',{\bf q'},{\bf p'})}^\psi(f)\right)(\pi)\mathcal{G}_\psi k(t',{\bf q'},{\bf p'},\pi)]d\pi d(t',{\bf q}',{\bf p}')
\\&=(2\pi)^{-(n+1)}\int_{\mathbb{H}^n}\int_{\mathbb{R}\backslash\{0\}}{\rm tr}[\mathcal{F}\left(\mathcal{L}_{(t',{\bf q'},{\bf p'})}^\psi(f)\right)(\lambda)\mathcal{G}_\psi k(t',{\bf q'},{\bf p'},\lambda)]|\lambda|^nd\lambda d(t',{\bf q}',{\bf p}'),
\end{align*}
for each $k\in L^2(\mathbb{H}^n)$.
We recall that, for Lie group $\mathbb{R}^d$ and a window function $\psi$, the classical Gabor transform of $f$ in $L^2(\mathbb{R}^d)$ is
$$\displaystyle\mathcal{G}_\psi f({\bf x},{\bf w})=\int_{\mathbb{R}^d}e^{-2\pi i{\bf w}.{\bf y}}f({\bf y})\overline{\psi({\bf y}-{\bf x})}d{\bf y}.$$
one can consider the difference between these two transforms.

In the sequel, we provide another example of the continuous Gabor transform on the non-abelian matrix group $SL(2,\mathbb{R})$.
The special linear group $SL(2,\mathbb{R})$ has significant role in the theory of linear canonical transformation.
The linear canonical transformation is a generalization of the Fourier, fractional Fourier, Laplace, Gauss-Weierstrass, Bargmann and also the Fresnel transforms in particular cases (see \cite{Hen, Mosh}). Canonical transforms provide an appropriate tool for the analysis of a class of differential equations which are applicable in Fourier optics and quantum mechanics.

\subsection{Matrix group {SL}(2,$\mathbb{R}$)}

We recall that $G=SL(2,\mathbb{R})$ is the group of $2\times 2$ real matrices of determinant one. This group is unimodular and it's Haar integral which is right and left invariant is given by
$$\int_{-\infty}^{+\infty}\int_{-\infty}^{+\infty}\int_{-\infty}^{+\infty}\int_{-\infty}^{+\infty} f\left(
                      \begin{array}{cc}
                        x & y \\
                        z & t \\
                      \end{array}
                    \right)
dxdydzdt.$$
One can list continuous unitary representations of $SL(2,\mathbb{R})$ in five classes. For more explanations see \cite{FollH}.

(1) The trivial representation $\iota$, acting on $\mathbb{C}$.

(2) The discrete series $\{\delta_n^{\pm}:n\ge 2\}$. For $n\ge 2$, let $\mathcal{H}_n^+$ be the space of holomorphic functions $f$ on the upper half plane $U=\{z:\Im(z)>0\}$ such that
$$\|f\|_{[n]}^2=\int\int_U|f(x+iy)|^2y^{n-2}dxdy<\infty.$$
The representation $\delta_n^+$ of $SL(2,\mathbb{R})$ on $\mathcal{H}_n^+$ is defined by
$$\delta_n^+\left(
              \begin{array}{cc}
                a & b \\
                c & d \\
              \end{array}
            \right)
f(z)=(-bz+d)^{-n}f\left(\frac{az-c}{-bz+d}\right)\ {\rm for\ each}\ f\in \mathcal{H}_n^+.$$
Similarly, $\mathcal{H}_n^-$ is the space of anti-holomorphic functions $f$ on the upper half plane $U$ satisfying the condition
$\|f\|_{[n]}<\infty$ and the representation $\delta_n^{-}$ of $SL(2,\mathbb{R})$ on $\mathcal{H}_n^-$ is given the same as $\delta_n^+$.
It can be checked that, the representations $\{\delta_n^{\pm}:n\ge 2\}$ are unitary and irreducible.

(3) The mock discrete series $\{\delta_1^{\pm}\}$. Let $\mathcal{H}_1^+$ be the space of holomorphic functions $f$ on half plane $U$ such that
$$\|f\|_{[1]}^2=\sup_{y>0}\int_{-\infty}^{+\infty}|f(x+iy)|^2dx<\infty,$$
and also let $\mathcal{H}_1^-$ be the corresponding space of anti-holomorphic functions. The representations $\delta_1^\pm$ of ${SL(2,\mathbb{R})}$
on $\mathcal{H}_1^\pm$ are the same as $\delta_n^\pm$. The Hilbert spaces $\mathcal{H}_1^\pm$ can be naturally identified with certain subspaces of $L^2(\mathbb{R})$, namely
$\widetilde{\mathcal{H}_1^+}=\{f\in L^2(\mathbb{R}):\widehat{f}(\xi)=0\  {\rm for}\ \xi<0\}$, and
$\widetilde{\mathcal{H}_1^-}=\{f\in L^2(\mathbb{R}):\widehat{f}(\xi)=0\  {\rm for}\ \xi>0\}.$
The unitary map from $\mathcal{H}_1^\pm$ to $\widetilde{\mathcal{H}_1^\pm}$ simply takes a holomorphic or anti-holomorphic function on the upper half plane to its boundary values on $\mathbb{R}$ and the inverse map is given by the Fourier inversion formula, if $f$ is in $\widetilde{\mathcal{H}_1^+}$ or $\widetilde{\mathcal{H}_1^-}$, the corresponding $F\in{\mathcal{H}_1^+}$ or ${\mathcal{H}_1^-}$ is given by
$\displaystyle F(z)=\int e^{2\pi i\xi z}\widehat{f}(\xi)d\xi$ and $\displaystyle F(z)=\int e^{2\pi i\xi \overline{z}}\widehat{f}(\xi)d\xi,$
when these identifications are made, the representations $\delta_1^\pm$ are still given by discrete series but with $x\in\mathbb{R}$ replacing $z\in U$.

(4) The principal series $\{\pi_{it}^\pm:t\in\mathbb{R}\}$. These representations of $SL(2,\mathbb{R})$ induced from the one-dimensional representations of the upper triangular subgroup
$$P(2,\mathbb{R})=\left\{M_{a,b}=\left(
                                \begin{array}{cc}
                                  a & b \\
                                  0 & a^{-1} \\
                                \end{array}
                              \right)
:a\in\mathbb{R}\backslash\{0\}, b\in\mathbb{R}\right\}.$$
Moreover, any one-dimensional representation of $P(2,\mathbb{R})$ must annihilate its commutator subgroup, namely
$\{M_{a,b}:a=1\}$, so it is easily seen that these representations are precisely
$\beta_{it}^+(M_{a,b})=|a|^{it}$ and also $\beta_{it}^-(M_{a,b})=|a|^{it}{\rm sign}a$, for all $t\in\mathbb{R}$.
The principal series are then defined by
$\pi_{it}^+:={\rm ind}_{P(2,\mathbb{R})}^{SL(2,\mathbb{R})}(\xi_{it}^+),$ and also
$\pi_{it}^-:={\rm ind}_{P(2,\mathbb{R})}^{SL(2,\mathbb{R})}(\xi_{it}^-).$
For more details on Induced representations and also the notation ${\rm ind}_H^G(\pi)$, we refer the readers to \cite{FollH}.
The representations $\pi_{it}^-$ and $\pi_{it}^+$ are known as the spherical principal series and non-spherical principal series respectively. The Hilbert spaces for these representations, consist of complex valued functions on $SL(2,\mathbb{R})$ satisfying certain covariance condition on the cosets of $P(2,\mathbb{R})$ and such functions are determined by their values on  $$Q(2,\mathbb{R})=\left\{\left(
                                 \begin{array}{cc}
                                   1 & 0 \\
                                   t & 1 \\
                                 \end{array}
                               \right):t\in\mathbb{R}\right\}.$$
$Q(2,\mathbb{R})$ intersects each coset in exactly one point and since $Q(2,\mathbb{R})\cong\mathbb{R}$, the map $f\mapsto f|_{Q(2,\mathbb{R})}$ sets up a unitary isomorphism from these Hilbert spaces to $L^2(\mathbb{R})$, and that the resulting realization of the representation $\pi_{it}^\pm$ on $L^2(\mathbb{R})$ is given by
$$\pi_{it}^\pm\left(
                \begin{array}{cc}
                  a & b \\
                  c & d \\
                \end{array}
              \right)
f(x)=m_\pm(-bx+d)|-bx+d|^{-1-it}f\left(\frac{ax-c}{-bx+d}\right),$$
where $m_+(y)=1$ and $m_-(y)={\rm sign}(y)$. The principal series representations are all irreducible except for $\pi_0^-$ which is the direct sum of the mock discrete series $\delta_1^+$ and $\delta_1^-$, when all these representations are realized on subspaces of $L^2(\mathbb{R})$. Also, the representations $\pi_{-it}^+$ and $\pi_{-it}^-$ are equivalent respectively to $\pi_{it}^+$ and $\pi_{it}^-$ and otherwise these representations are all equivalent.

(5) The complementary series $\{\kappa_s:0<s<1\}$. The Hilbert space for $\kappa_s$ is the set of all complex valued functions $f$ on $\mathbb{R}$ such that
$$\|f\|_{(s)}^2=\frac{s}{2}\int_{-\infty}^{+\infty}\int_{-\infty}^{+\infty} f(x)\overline{f(y)}|x-y|^{s-1}dxdy<\infty,$$
and the action of $SL(2,\mathbb{R})$ is like that of the spherical principal series
$$\kappa_s\left(
            \begin{array}{cc}
              a & b \\
              c & d \\
            \end{array}
          \right)
f(x)=|-bx+d|^{-1-s}f\left(\frac{ax-c}{-bx+d}\right).$$
All these representation which had been mentions above are irreducible and equivalent except that $\pi_{-it}^\pm\cong\pi_{it}^\pm$ and $\pi_0^-\cong\delta_1^+\bigoplus\delta_1^-$ as we discussed about them and every irreducible representation of $SL(2,\mathbb{R})$ is equivalent to one of them.
Thus, we can identify the set of all irreducible representations of $\widehat{SL(2,\mathbb{R})}$ as follows;
$$\widehat{SL(2,\mathbb{R})}=\{\iota\}\bigcup\{\delta_n^{\pm}:n\ge 1\}\bigcup\{\pi_{it}^+:t\ge0\}\bigcup\{\pi_{it}^-:t>0\}\bigcup\{\kappa_s:0<s<1\}.$$

The Plancherel measure of the complementary and mock discrete series and trivial representation is zero, and on the principal and discrete series it is given by
$d\pi_{it}^+={t}/{2}\tanh({\pi t}/{2})dt$, $d\pi_{it}^-={t}/{2}\coth({\pi t}/{2})dt$ and also $d\{\delta_n^+\}=d\{\delta_n^-\}=n-1$.


\begin{example}\label{EX3}
Let $\psi(X)=2e^{-\pi{\|X\|}^2}$ be the Gaussian function on $SL(2,\mathbb{R})$, where $\|X\|$ is the Hilbert-Schmidt norm
 \footnote{ This norm is eventually called as the Frobenius norm or Schatten 2-norm.}of a matrix $X$. We compute the Gabor transform
 of $f\in \mathcal{C}_c(SL(2,\mathbb{R}))$ with respect to $\psi$ on $SL(2,\mathbb{R})\times\widehat{SL(2,\mathbb{R})}$ by

(1) For each $n\ge 2$, $h,k\in\mathcal{H}_n^\pm$ and $(X,\delta_n^\pm)\in SL(2,\mathbb{R})\times\widehat{SL(2,\mathbb{R})}$ we have
\begin{align*}
\langle k,\mathcal{G}_\psi f(X,\delta_n^\pm)h\rangle&=\int_{SL(2,\mathbb{R})}\mathcal{L}_X^\psi f(Y)\langle k,\delta_n^\pm(Y)^*h\rangle_{\mathcal{H}_n^\pm} dY
\\&=\int_{SL(2,\mathbb{R})}\int\int_U\mathcal{L}_X^\psi f(Y)\delta_n^\pm(Y)k(w)\overline{h(w)}\Im(w)^{n-2}dwdY
\\&=\int_{\mathbb{R}^4}\int\int_U\mathcal{L}_X^\psi f(Y){(t-yw)^{-n}k\left(\frac{xw-z}{t-yw}\right)}\overline{h(w)}\Im(w)^{n-2}dwd(x,y,z,t),
\end{align*}
where $\mathcal{L}_X^\psi f(Y)=2e^{-\pi\|X^{-1}Y\|^2}f(Y)$ and $Y=(x,y,z,t)$.

(2) For each $t\in\mathbb{R}$, $g,k\in L^2(\mathbb{R})$ and $(X,\pi_{it}^\pm)\in SL(2,\mathbb{R})\times\widehat{SL(2,\mathbb{R})}$ we have
\begin{align*}
\langle g,\mathcal{G}_\psi f(X,\pi_{it}^\pm)k\rangle_{L^2(\mathbb{R})}&=\int_{SL(2,\mathbb{R})}\mathcal{L}_X^\psi f(Y)\langle g,\pi_{it}^\pm(Y)^*k\rangle_{L^2(\mathbb{R})} dY
\\&=\int_{SL(2,\mathbb{R})}\int_\mathbb{R}\mathcal{L}_X^\psi f(Y)\pi_{it}^\pm(Y)k(v)\overline{g(v)}dvdY
\\&=\int_{\mathbb{R}^4}\int_\mathbb{R}\mathcal{L}_X^\psi f(Y)m_\pm(-yv+u)|-yv+u|^{-1-it}k\left(\frac{xv-z}{-yv+u}\right)\overline{g(v)}dvd(x,y,z,u),
\end{align*}
where $\mathcal{L}_X^\psi f(Y)=2e^{-\pi\|X^{-1}Y\|^2}f(Y)$ and $Y=(x,y,z,u)$.

(3) For each $0<s<1$, $h,k\in\mathcal{H}_{(s)}$ and $(X,\kappa_s)\in SL(2,\mathbb{R})\times\widehat{SL(2,\mathbb{R})}$ we have
\begin{align*}
\langle h,\mathcal{G}_\psi f(X,\kappa_s)k\rangle_{(s)}&=\int_{SL(2,\mathbb{R})}\mathcal{L}_X^\psi f(Y)\langle h,\kappa_s(Y)^*k\rangle_{(s)} dY
\\&=\frac{s}{2}\int_{SL(2,\mathbb{R})}\int_{-\infty}^{+\infty}\int_{-\infty}^{+\infty}\mathcal{L}_X^\psi f(Y)\kappa_s(Y)k(v)\overline{h(u)}|u-v|^{1-s}dvdudY
\\&=\frac{s}{2}\int_{\mathbb{R}^4}\int_{-\infty}^{+\infty}\int_{-\infty}^{+\infty}\mathcal{L}_X^\psi f(Y)k\left(\frac{xv-z}{-yv+t}\right)\overline{h(u)}|u-v|^{1-s}|-yv+t|^{-1-s}dvdud(x,y,z,t),
\end{align*}
where $\mathcal{L}_X^\psi f(Y)=2e^{-\pi\|X^{-1}Y\|^2}f(Y)$ and $Y=(x,y,z,t)$.
\end{example}
Using Theorem \ref{17}, we can reconstruct each $f\in L^2(SL(2,\mathbb{R}))$ via
\begin{align*}
\langle f,k\rangle_{L^2(G)}&
=\int_{\mathbb{R}^4}\int_{\widehat{SL(2,\mathbb{R})}}{\rm tr}[\widehat{\mathcal{L}_X^\psi f}(\pi)\mathcal{G}_\psi k(Y,\pi)]d\pi dY
\\&=\int_{\mathbb{R}^4}\int_{\widehat{SL(2,\mathbb{R})}}{\rm tr}[\widehat{\mathcal{L}_X^\psi f}(\pi_{it}^+)\mathcal{G}_\psi k(y,\pi_{it}^+)]d\pi_{it}^+dY+
\int_{\mathbb{R}^4}\int_{\widehat{SL(2,\mathbb{R})}}{\rm tr}[\widehat{\mathcal{L}_X^\psi f}(\pi_{it}^-)\mathcal{G}_\psi k(Y,\pi_{it}^-)]d\pi_{it}^-dY
\\&\ \ +\int_{\mathbb{R}^4}\int_{\widehat{SL(2,\mathbb{R})}}{\rm tr}[\widehat{\mathcal{L}_X^\psi f}(\delta_n^+)\mathcal{G}_\psi k(Y,\delta_n^+)]d\delta_n^+dY
+\int_{\mathbb{R}^4}\int_{\widehat{SL(2,\mathbb{R})}}{\rm tr}[\widehat{\mathcal{L}_X^\psi f}(\delta_n^-)\mathcal{G}_\psi k(Y,\delta_n^-)]d\delta_n^-dY
\\&=\frac{1}{2}\int_{\mathbb{R}^4}\int_0^\infty\left(t\tanh \frac{\pi t}{2}{\rm tr}[\widehat{\mathcal{L}_X^\psi f}(\pi_{it}^+)\mathcal{G}_\psi k(Y,\pi_{it}^+)]+t\coth \frac{\pi t}{2}{\rm tr}[\widehat{\mathcal{L}_X^\psi f}(\pi_{it}^-)\mathcal{G}_\psi k(Y,\pi_{it}^-)]\right)
dtdY\\&\ \ +\int_{\mathbb{R}^4}\left(\sum_{n=1}^\infty(n-1)\left({\rm tr}[\widehat{\mathcal{L}_X^\psi f}(\delta_n^+)\mathcal{G}_\psi k(Y,\delta_n^+)]+{\rm tr}[\widehat{\mathcal{L}_X^\psi f}(\delta_n^-)\mathcal{G}_\psi k(Y,\delta_n^-)]\right)\right)dY,
\end{align*}
for each $k\in L^2(SL(2,\mathbb{R}))$.


\bibliographystyle{amsplain}

\end{document}